\numberwithin{equation}{section}
\newtheorem{theorem}{Theorem}[section]
\newtheorem{corollary}[theorem]{Corollary}
\newtheorem{proposition}[theorem]{Proposition}
\newtheorem{lemma}[theorem]{Lemma}
\theoremstyle{definition}
\newtheorem{example}[theorem]{Example}
\theoremstyle{remark}
\newcommand{\Tr}{\mathrm{Tr}}
\newcommand{\Sym}{\mathrm{Sym}}
\def\XX{{\bf X}}
\def\+{{\oplus}}
\def\Ddots{\mathinner{\mkern1mu\raise\p@
\vbox{\kern7\p@\hbox{.}}\mkern2mu
\raise4\p@\hbox{.}\mkern2mu\raise7\p@\hbox{.}\mkern1mu}}
\DeclareMathOperator{\lcm}{lcm}
\DeclareMathOperator{\CIRC}{circ}
\begin{document}
\title[Closed formulas for exponential sums]{Closed formulas for exponential sums of symmetric polynomials over Galois fields}

\author{Francis N. Castro}
\address{Department of Mathematics, University of Puerto Rico, 17 Ave. Universidad STE 1701, San Juan, PR 00925}
\email{franciscastr@gmail.com}

\author{Luis A. Medina}
\address{Department of Mathematics, University of Puerto Rico, 17 Ave. Universidad STE 1701, San Juan, PR 00925}
\email{luis.medina17@upr.edu}

\author{L. Brehsner Sep\'ulveda}
\address{Department of Mathematics, University of Puerto Rico, 17 Ave. Universidad STE 1701, San Juan, PR 00925}
\email{leonid.sepulveda1@upr.edu}

\begin{abstract}
Exponential sums have applications to a variety of scientific fields, including, but not limited to, cryptography, coding theory and information theory.  Closed formulas for exponential sums of symmetric Boolean functions were found by Cai, Green and Thierauf in the late 1990's.  Their closed formulas imply that these exponential sums are linear recursive.  The linear recursivity of these sums has been exploited in numerous papers and has been used to compute the asymptotic behavior of such sequences. In this article, we extend the result of Cai, Green and Thierauf, that is, we find closed formulas for exponential sums of symmetric polynomials over any Galois fields.  Our result also implies that the recursive nature of these sequences is not unique to the binary field, as they are also linear recursive over any finite field.  In fact, we provide explicit linear recurrences with integer coefficients for such sequences.  As a byproduct of our results, we discover a link between exponential sums of symmetric polynomials over Galois fields and a problem for multinomial coefficients which similar to the problem of bisecting binomial coefficients.
\end{abstract}

\subjclass[2010]{05E05, 11T23, 11B37}
\date{\today}
\keywords{Exponential sums, symmetric functions, linear recurrences}

\maketitle
\section{Introduction}
Combinatorics and number theory are classic areas of mathematics with fascinating objects that captivate the attention of mathematicians.  One subject that lies in the intersection of these two areas is the theory of Boolean functions.  These beautiful functions have plenty of applications to different scientific fields.  Some examples include electrical engineering, game theory, cryptography, coding theory and information theory.  

An $n$-variable {\em Boolean function} is a function $F({\bf X})$ from the vector space $\mathbb{F}_2^n$ to $\mathbb{F}_2$ where $\mathbb{F}_2=\{0,1\}$ is the binary field and $n$ is a positive number.  In some applications related to cryptography it is important for Boolean functions to be balanced.  A {\it balanced Boolean function} is one for which the number of zeros and the number of ones are equal in its truth table (output table). Balancedness of Boolean functions can be studied from the point of exponential sums.  The {\em exponential sum} of a Boolean function $F({\bf X})$ over $\mathbb{F}_2$ is defined as
\begin{equation}
\label{expS}
S(F)=\sum_{{\bf x}\in \mathbb{F}_2^n} (-1)^{F({\bf x})}.
\end{equation}
Observe that a Boolean function is balanced if and only if $S(F)=0$.

Memory restrictions of current technology have made the problem of efficient implementations of Boolean functions a challenging one. In general, this problem is very hard to tackle, but imposing conditions on these functions may ease the problem. For instance, symmetric Boolean functions are good candidates for efficient implementations and today they are an active area research  \cite{cai, cm1, cm2, cm3, ccms, cusick4, cusick2}.  

In general, to find closed formulas for exponential sums of symmetric Boolean functions was an open problem until Cai, Green and Thierauf found formulas for them in the 1990's \cite{cai}. 
Moreover, their formulas imply that exponential sums of symmetric Boolean functions have a recursive nature.  This has been exploited in \cite{cgm2,cm1,cm2,cm3,cusick4}.   In the particular case of \cite{cm1}, the recursive nature of these sequences and their closed formulas were used to prove asymptotically a conjecture about the balancedness of elementary symmetric Boolean polynomials \cite{cusick2}.

A natural problem to explore is the possibility that these results can be extended to other finite fields or perhaps they are just natural consequences of working over the binary field.
Recently in \cite{ccms}, it has been showed that exponential sums of linear combinations of elementary symmetric polynomials over Galois fields also satisfy linear recurrences.  Therefore, at least the recursive nature of these sequences is 
not unique to the binary field.  

The recursive nature of exponential sums of symmetric polynomials over Galois fields presented in \cite{ccms} did not include explicit linear recurrences for these sequences.  Instead, they proved the existence of such recurrences and provided a method to find them.  In this article, we find explicit linear recurrences for these sequences.  This is done by providing closed formulas for exponential sums of symmetric polynomials over Galois fields. In other words, in this paper we settle the problem of finding closed formulas for exponential sums of linear combinations of elementary symmetric polynomials over any Galois field.  This extends the work of Cai, Green and Thierauf for the binary field \cite{cai} to every finite field.  As far as we know, this is new.  

Our closed formulas depend on some multinomial sum expressions for our exponential sums.  These expressions provide a link between exponential sums of symmetric polynomials over Galois fields and a problem for multinomial 
coefficients which is similar to the problem of bisecting binomial coefficients.  A solution $(\delta_0,\delta_1,\cdots, \delta_n)$ to the equation
\begin{equation}
 \sum_{j=0}^n \delta_j \binom{n}{j}=0,\,\,\, \delta_j \in \{-1,1\},
\end{equation}
is said to give a {\em bisection of the binomial coefficients} $\binom{n}{j}$, $0\leq j \leq n.$  Observe that a solution to (\ref{bisec}) provides us with two disjoints sets $A,B$ such that 
$A\cup B =\{0,1,2,\cdots,n\}$ and
\begin{equation}
 \sum_{j \in A} \binom{n}{j}=\sum_{j\in B}\binom{n}{j}=2^{n-1}.
\end{equation}
The problem of bisecting binomial coefficients is a very interesting problem in its own right, but it is out of the scope of this work.  However, we believe that the connection between exponential
sums of symmetric polynomials and a problem similar to bisecting binomial coefficients is very appealing and underlines the balancedness of symmetric polynomials over finite fields.

This article is divided as follows. The next section contains some preliminaries.  In Section \ref{multinomialsums} we provide multinomial sum expressions for exponential sums of symmetric polynomials
over Galois fields.  We also include some representations that depend on integer partitions.  These multinomial sums representations are a computational improvement over the formal definition of
exponential sums.  Moreover, as just mentioned, they provide a connection to a problem similar to the problem of bisecting binomial coefficients.  Section \ref{closedformulas} is the core and final section 
of this article.  It is also the section where the main results are presented.  In particular, we find closed formulas for some multinomial sum.  This, together with multinomial sum representations
for our exponential sums, allow us to prove closed formulas for exponential sums of symmetric polynomials over finite fields. We also provide explicit linear recurrences  for such exponential sums, showing
that the recursive nature of these sequences is not special to the binary case.  Moreover, every multi-variable function over a finite field extension of $\mathbb{F}_2$ can be identified with a Boolean 
function.  Thus, these results also provide new families of Boolean functions that might be useful for efficient implementations.

\section{Preliminaries}
It is a well-established result in the theory of Boolean functions that any symmetric Boolean function can be identified with a linear combination of elementary symmetric Boolean polynomials.
To be more precise, let $\boldsymbol{e}_{n,k}$ be the elementary symmetric polynomial in $n$ variables of degree $k$. For example,
\begin{equation*}
\boldsymbol{e}_{4,3} = X_1 X_2 X_3\oplus X_1 X_4 X_3\oplus X_2 X_4 X_3\oplus X_1 X_2 X_4,
\end{equation*}
where $\oplus$ represents addition modulo 2.  Every symmetric Boolean function $F({\bf X})$ can be identified with an expression of the form
\begin{equation}
\label{genboolsym}
F(\XX)=\boldsymbol{e}_{n,k_1}\oplus \boldsymbol{e}_{n,k_2}\oplus \cdots\oplus \boldsymbol{e}_{n,k_s},
\end{equation}
where $0\leq k_1<k_2<\cdots<k_s$ are integers.  For the sake of simplicity, the notation $\boldsymbol{e}_{n,[k_1,\ldots,k_s]}$ is used to denote (\ref{genboolsym}).  For example,
\begin{eqnarray}
\boldsymbol{e}_{3,[2,1]}&=&\boldsymbol{e}_{3,2}\oplus \boldsymbol{e}_{3,1}\\ \nonumber
&=& X_1 X_2\oplus X_3 X_2\oplus X_1 X_3\oplus X_1\oplus X_2\oplus X_3.
\end{eqnarray}

As mentioned in the introduction, it is known that exponential sums of symmetric Boolean functions are linear recursive \cite{cai, cm1}.  Moreover, closed formulas for exponential sums of symmetric Boolean 
functions are well known.  In fact, Cai et al. \cite{cai} proved the following theorem.
\begin{theorem}[\cite{cai}]
\label{invariant}
Let $1\leq k_1< \cdots <k_s$ be fixed integers and $r=\lfloor \log_2(k_s) \rfloor+1$. The value of the exponential sum $S(\boldsymbol{e}_{n,[k_1,\cdots,k_s]})$ is given by
\begin{eqnarray}
\label{genvalue}\nonumber
S(\boldsymbol{e}_{n,[k_1,\cdots,k_s]})&=& c_0(k_1,\cdots,k_s) 2^n + \sum_{j=1}^{2^r-1} c_j(k_1,\cdots,k_s) (1+\zeta_j)^n,
\end{eqnarray}
where $\zeta_j = e^{\frac{\pi i\, j}{2^{r-1}}}, i=\sqrt{-1}$ and 
\begin{equation}
\label{coeffs}
c_j(k_1,\cdots,k_s)=\frac{1}{2^r}\sum_{t=0}^{2^r-1}(-1)^{\binom{t}{k_1}+\cdots+\binom{t}{k_s}}\zeta_j^{-t}.
\end{equation}
\end{theorem}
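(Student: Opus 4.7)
The plan is to reduce the sum over $\mathbb{F}_2^n$ to a sum over Hamming weights, exploit the $2$-adic periodicity of $\binom{w}{k_i}\bmod 2$ via Lucas' theorem, and then extract the individual residue classes with a standard roots-of-unity filter.

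First, since $\boldsymbol{e}_{n,k}(\mathbf x)$ depends only on the Hamming weight $w=|\mathbf x|$ of $\mathbf x$, and in fact $\boldsymbol{e}_{n,k}(\mathbf x)\equiv \binom{w}{k}\pmod 2$, I would rewrite
\begin{equation*}
S(\boldsymbol{e}_{n,[k_1,\ldots,k_s]})
=\sum_{w=0}^{n}\binom{n}{w}(-1)^{\binom{w}{k_1}+\cdots+\binom{w}{k_s}}.
\end{equation*}
Next, because $r=\lfloor\log_2 k_s\rfloor+1$ implies $k_i<2^r$ for every $i$, Lucas' theorem gives that $\binom{w}{k_i}\bmod 2$ depends only on the last $r$ binary digits of $w$, i.e.\ only on $w\bmod 2^r$. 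Consequently the sign $(-1)^{\binom{w}{k_1}+\cdots+\binom{w}{k_s}}$ is a function of $t:=w\bmod 2^r$ alone, and grouping terms by this residue yields
\begin{equation*}
S(\boldsymbol{e}_{n,[k_1,\ldots,k_s]})
=\sum_{t=0}^{2^r-1}(-1)^{\binom{t}{k_1}+\cdots+\binom{t}{k_s}}\,
\sum_{\substack{0\le w\le n\\ w\equiv t\,(\mathrm{mod}\,2^r)}}\binom{n}{w}.
\end{equation*}

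Then I would apply the roots-of-unity filter with $N=2^r$ and the $2^r$-th roots of unity $\zeta_j=e^{2\pi i j/2^r}=e^{\pi i j/2^{r-1}}$, namely
\begin{equation*}
\sum_{w\equiv t\,(\mathrm{mod}\,2^r)}\binom{n}{w}
=\frac{1}{2^r}\sum_{j=0}^{2^r-1}\zeta_j^{-t}(1+\zeta_j)^n,
\end{equation*}
which is immediate from the binomial theorem applied to $(1+\zeta_j)^n$ and the orthogonality relation $\sum_{j=0}^{2^r-1}\zeta_j^{w-t}=2^r\,[w\equiv t]$. Substituting and exchanging the order of summation puts the answer in the form $\sum_{j=0}^{2^r-1}c_j(1+\zeta_j)^n$, with the coefficients $c_j$ exactly as in \eqref{coeffs}; isolating the $j=0$ term gives the stated $c_0\cdot 2^n$ contribution, since $(1+\zeta_0)^n=2^n$.

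The only genuinely content-bearing step is the periodicity claim in the second part: that $\binom{w}{k}\bmod 2$ depends only on $w\bmod 2^r$ whenever $k<2^r$. I expect this to be the main (albeit mild) obstacle, and I would handle it by writing $w=2^r q+t$ in binary and noting via Lucas that $\binom{w}{k}\equiv\prod_{i}\binom{w_i}{k_i}\pmod 2$ where $w_i,k_i$ are the binary digits; since $k<2^r$ all of $k$'s nonzero digits lie in positions $<r$, so only the digits of $w$ below $2^r$ (i.e.\ the digits of $t$) contribute. Everything else is bookkeeping: the reduction to weights uses only the symmetry of $\boldsymbol{e}_{n,k}$, and the filter step is a standard discrete Fourier identity on $\mathbb{Z}/2^r\mathbb{Z}$.
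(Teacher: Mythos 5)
Your proof is correct. The reduction to Hamming weights, the Lucas-theorem periodicity of $\binom{w}{k_i}\bmod 2$ in $w\bmod 2^r$, and the roots-of-unity filter together yield exactly \eqref{coeffs}, and every step checks out. The route differs in packaging from the one the paper takes (following \cite{cai} and reproduced in its Proposition~\ref{closedformprop} and Corollary~\ref{coroclosed}): there, the residue-class sums $r_t(n)=\sum_{w\equiv t\,(2^r)}\binom{n}{w}$ are shown to satisfy the first-order vector recurrence $\boldsymbol{r}(n)=A\,\boldsymbol{r}(n-1)$ with $A=\CIRC(1,0,\ldots,0,1)$, and the closed form is extracted by diagonalizing this circulant with the DFT matrix, so that $(1+\zeta_j)^n$ appears as the $n$-th power of an eigenvalue. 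Your direct application of character orthogonality on $\mathbb{Z}/2^r\mathbb{Z}$ is the same Fourier computation collapsed into one line, and it is shorter and more elementary since it avoids linear algebra entirely. What the paper's formulation buys is that it makes the linear recurrence satisfied by $\{S(\boldsymbol{e}_{n,[k_1,\ldots,k_s]})\}$ (with characteristic polynomial $\prod_j(X-(1+\zeta_j))$) manifest, and it is the version that iterates cleanly to the nested multinomial sums of Theorem~\ref{generalclosed} needed for general $\mathbb{F}_q$; your filter argument would also iterate, but the bookkeeping there is precisely what the circulant machinery organizes.
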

Theorem \ref{invariant} and a closed formula for $c_0(k)$ (proved in \cite{cm1}) were used by Castro and Medina \cite{cm1} to prove asymptotically a conjecture of Cusick, Li and 
St$\check{\mbox{a}}$nic$\check{\mbox{a}}$ about the balancedness of elementary symmetric polynomials \cite{cusick2}.  An adaptation of Theorem \ref{invariant} to perturbations of symmetric Boolean functions
(see \cite{cm2}) was recently used in \cite{cgm2} to prove a generalized conjecture of Canteaut and Videau \cite{canteaut} about the existence of balanced perturbations when the number of variables grows.  The original conjecture, which was stated for symmetric Boolean functions, said that only trivially balanced functions exists when the number of variables grows.  The original conjecture was proved by Guo, Gao and Zhao \cite{ggz}.  The same behavior holds true for perturbations of symmetric Boolean functions.

One of the goals of this article is to generalize Theorem \ref{invariant} to the general setting of Galois fields.  
If $F:\mathbb{F}_q^n\to \mathbb{F}_q$, then its {\it exponential sum over $\mathbb{F}_q$} is given by 
\begin{equation}
\label{expSq}
 S_{\mathbb{F}_q}(F)=\sum_{{\bf x}\in \mathbb{F}^n_q} e^{\frac{2\pi i}{p} \text{Tr}_{\mathbb{F}_q/\mathbb{F}_p}(F({\bf x}))},
\end{equation}
where $\Tr_{\mathbb{F}_q/\mathbb{F}_p}$ represents the field trace function from $\mathbb{F}_q$ to $\mathbb{F}_p$. The {\it field trace function} can be explicitly defined as
\begin{equation}
 \text{Tr}_{\mathbb{F}_{p^l}/\mathbb{F}_p}(\alpha)=\sum _{j=0}^{l-1} \alpha ^{p^j},
\end{equation}
with arithmetic done in $\mathbb{F}_{p^l}$.  Recently in \cite{ccms}, it was proved that exponential sums over $\mathbb{F}_q$ of linear
combinations of elementary symmetric polynomials are linear recurrent with integer coefficients. Thus, the recursive nature of these sequences is not restricted to $\mathbb{F}_2$. The approach presented in \cite{ccms},
however, does not provide specific linear recurrences for these functions.  Instead, it gives a procedure that relies on linear algebra to calculate them.  A closed formula for these sequences, like the one presented in 
Theorem \ref{invariant}, would allow us to find such recurrences.  Perhaps it can also be used to settle, at least asymptotically, the generalization of Cusick, Li and St$\check{\mbox{a}}$nic$\check{\mbox{a}}$
conjecture for Galois fields, see \cite{acgmr}.

The formal definition of an exponential sum is not very useful if one desires to calculate the value of $S_{\mathbb{F}_q}(F)$.  In fact, in general, this problem is clearly exponentially hard.  
However, imposing conditions on the function $F$ sometimes simplifies matters.  For example, in the case of symmetric Boolean functions, it is not hard to show that
\begin{equation}
\label{binomS}
 S(\boldsymbol{e}_{n,[k_1,\cdots,k_s]}) = \sum_{j=0}^n (-1)^{\binom{j}{k_1}+\cdots+\binom{j}{k_s}} \binom{n}{j}.
\end{equation}
Equation (\ref{binomS}) is a clear computational improvement over (\ref{expS}).   It also connects (as mentioned in the introduction) the problem of balancedness of symmetric Boolean functions to the problem of 
bisecting binomial coefficients (see Mitchell \cite{mitchell}).   As mentioned in the introduction, a solution $(\delta_0,\delta_1,\cdots, \delta_n)$ to the equation
\begin{equation}
\label{bisec}
 \sum_{j=0}^n \delta_j \binom{n}{j}=0,\,\,\, \delta_j \in \{-1,1\},
\end{equation}
is said to give a {\em bisection of the binomial coefficients} $\binom{n}{j}$, $0\leq j \leq n.$  
The problem of bisecting binomial coefficients is an interesting problem in its own right, however, it is out of the scope of this work.  The interested reader is invited to read \cite{ims,mitchell}.

In the next section, we proved a formula similar to (\ref{binomS}) for $S_{\mathbb{F}_q}(\boldsymbol{e}_{n,k})$ using multinomial coefficients.  The formula is not only a computational improvement over
the formal definition of $S_{\mathbb{F}_q}(F)$, but also provide a connection to a problem similar to the problem of bisecting of binomial coefficients for multinomial coefficients.  Moreover, the fact that
exponential sums of symmetric polynomials over finite fields can be expressed as multinomial sums is later used in the proof of closed formulas for them.  The proof of the closed formulas also depends on
a classical result in number theory known as Lucas' Theorem.  We decided to include it here for completeness purposes.

\begin{theorem}[Lucas' Theorem]  Suppose that $n$ and $k$ are non-negative integers and let $p$ be a prime.  Suppose that
\begin{eqnarray*}
 n &=& n_0+n_1p+\cdots+n_lp^l\\
 k &=& k_0+k_1p+\cdots+k_lp^l,
\end{eqnarray*}
with $0\leq n_j,k_j <p$ for $j=1,\cdots, l$. Then,
$$\binom{n}{k} \equiv \prod_{j=0}^l\binom{n_j}{k_j}\mod p.$$
\end{theorem}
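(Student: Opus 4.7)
The plan is to prove Lucas' Theorem by comparing coefficients of a carefully chosen polynomial identity in $\mathbb{F}_p[X]$. The starting point will be the Frobenius/\,freshman's-dream identity: since $p \mid \binom{p}{j}$ for $1 \le j \le p-1$, we have $(1+X)^p \equiv 1 + X^p \pmod{p}$ as polynomials in $\mathbb{Z}[X]$. Iterating this congruence $j$ times gives $(1+X)^{p^j} \equiv 1 + X^{p^j} \pmod{p}$, which is the only non-trivial algebraic input needed.

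Next, using the base-$p$ expansion $n = n_0 + n_1 p + \cdots + n_l p^l$, I would factor
\begin{equation*}
(1+X)^n \;=\; \prod_{j=0}^{l} (1+X)^{n_j p^j} \;=\; \prod_{j=0}^{l} \bigl((1+X)^{p^j}\bigr)^{n_j} \;\equiv\; \prod_{j=0}^{l} (1+X^{p^j})^{n_j} \pmod{p}.
\end{equation*}
The left-hand side has $\binom{n}{k}$ as the coefficient of $X^k$ by the binomial theorem. Expanding the right-hand side as
\begin{equation*}
\prod_{j=0}^{l} \sum_{i_j=0}^{n_j} \binom{n_j}{i_j} X^{i_j p^j},
\end{equation*}
the coefficient of $X^k$ is the sum of $\prod_j \binom{n_j}{i_j}$ taken over all tuples $(i_0,\ldots,i_l)$ with $0 \le i_j \le n_j < p$ and $\sum_j i_j p^j = k$.

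The final step is to observe that uniqueness of the base-$p$ representation of $k$ forces $i_j = k_j$ for every $j$: since each $i_j$ satisfies $0 \le i_j < p$, the equation $\sum_j i_j p^j = k = \sum_j k_j p^j$ has only the solution $i_j = k_j$ (with the convention that if some $k_j$ exceeds $n_j$, then the corresponding binomial coefficient $\binom{n_j}{k_j}$ is zero, matching the fact that no valid tuple contributes). Hence the coefficient of $X^k$ on the right is exactly $\prod_{j=0}^{l} \binom{n_j}{k_j}$, and comparing coefficients mod $p$ yields the claimed congruence.

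The proof is short and the main obstacle is conceptual rather than technical: one needs the right framing, namely reducing the identity between integers modulo $p$ to an identity of polynomials in $\mathbb{F}_p[X]$ where the Frobenius endomorphism can be applied repeatedly. Once that setup is in place, the rest is a bookkeeping exercise using uniqueness of digit expansions.
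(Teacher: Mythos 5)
Your proof is correct. Note that the paper itself gives no proof of Lucas' Theorem --- it is stated as a classical result included only for completeness --- so there is nothing internal to compare against. Your argument is the standard generating-function proof: reduce $(1+X)^n$ modulo $p$ using the iterated Frobenius identity $(1+X)^{p^j}\equiv 1+X^{p^j} \pmod p$, expand the resulting product, and extract the coefficient of $X^k$ via uniqueness of the base-$p$ digit expansion. You also handle the one delicate point correctly: when some digit $k_j$ exceeds $n_j$, no tuple $(i_0,\ldots,i_l)$ with $0\le i_j\le n_j$ can sum to $k$, and this is consistent with the convention $\binom{n_j}{k_j}=0$, so both sides of the congruence vanish. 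The argument is complete as written.
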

Let $D=p^{\lfloor\log_p(k)\rfloor+1}$.  Observe that one consequence of Lucas' Theorem is 
\begin{equation}
 \binom{n+D}{k} \equiv \binom{n}{k}\mod p.
\end{equation}
This will be used throughout the rest of the paper.


\section{A formula for exponential sums in terms of multinomial sums}
\label{multinomialsums}

In this section we prove a formula for $S_{\mathbb{F}_q}(\boldsymbol{e}_{n,k})$ in terms of multinomial coefficients.  This formula is a computational improvement over (\ref{expSq}).  We start by finding
a formula, in this case, a recursive one, for the value of $\boldsymbol{e}_{n,k}$ at a vector ${\bf x}$.  

Let $n, k$ and $m$ be positive integers and $a_s$ be a parameter 
($s$ a positive integer).  Let 
\begin{equation}
\Lambda_{a_1}(k,m)=a_1^k \binom{m}{k} 
\end{equation}
and define $\Lambda_{a_1,\cdots, a_l}$ recursively by
\begin{equation}
 \Lambda_{a_1,a_2,\cdots, a_{l+1}}(k,m_1,m_2,\cdots,m_{l+1})=
 \sum_{j=0}^{m_{l+1}}\binom{m_{l+1}}{j}a_{l+1}^j\Lambda_{a_1,\cdots,a_l}(k-j,m_1,m_2,\cdots,m_{l}),
\end{equation}
The value of $\boldsymbol{e}_{n,k}$ is linked to $\Lambda_{a_1,\cdots, a_l}$.

\begin{lemma}
\label{lemmaSymm}
 Let $n$ and $k$ be positive integers.  Let $A_l=\{0,a_1,\cdots,a_l\}$ and ${\bf x} \in A_l^n$.  Suppose that $a_j$ appears $m_j$ times in ${\bf x}$.  Then,
 \begin{equation}
 \boldsymbol{e}_{n,k}({\bf x})=\Lambda_{a_1,\cdots,a_l}(k,m_1,\cdots,m_{l}). 
 \end{equation}
 \end{lemma}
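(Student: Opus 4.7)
The plan is to prove this by induction on $l$, after first giving a direct combinatorial description of $\boldsymbol{e}_{n,k}({\bf x})$. Since $\boldsymbol{e}_{n,k}({\bf x})$ is the sum over all $k$-subsets $I \subseteq \{1,\ldots,n\}$ of the products $\prod_{i\in I} x_i$, and any subset containing an index where $x_i=0$ contributes nothing, only subsets lying entirely in the support of ${\bf x}$ contribute. Grouping these subsets by how many indices they take from the $m_s$ positions carrying value $a_s$ (for $s=1,\ldots,l$), I obtain
\begin{equation*}
\boldsymbol{e}_{n,k}({\bf x}) \;=\; \sum_{\substack{j_1+\cdots+j_l=k \\ 0\le j_s \le m_s}} \prod_{s=1}^{l} \binom{m_s}{j_s} a_s^{j_s}.
\end{equation*}
This identity, together with an induction on $l$, is the heart of the proof.

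For the base case $l=1$, the only composition of $k$ with one part is $j_1=k$, so the above sum reduces to $a_1^k \binom{m_1}{k}$, which matches $\Lambda_{a_1}(k,m_1)$ by definition. For the inductive step, assume the statement holds for $l$ parameters and consider the $l+1$ case. Using the recursion defining $\Lambda$, I split the outer sum according to $j=j_{l+1}$:
\begin{equation*}
\Lambda_{a_1,\ldots,a_{l+1}}(k,m_1,\ldots,m_{l+1}) \;=\; \sum_{j=0}^{m_{l+1}} \binom{m_{l+1}}{j} a_{l+1}^{j}\, \Lambda_{a_1,\ldots,a_l}(k-j,m_1,\ldots,m_l).
\end{equation*}
By the inductive hypothesis, $\Lambda_{a_1,\ldots,a_l}(k-j,m_1,\ldots,m_l)$ equals $\sum_{j_1+\cdots+j_l=k-j} \prod_{s=1}^{l} \binom{m_s}{j_s} a_s^{j_s}$, so substituting and combining indices gives exactly the sum over compositions $j_1+\cdots+j_{l+1}=k$ of $\prod_{s=1}^{l+1} \binom{m_s}{j_s} a_s^{j_s}$, which by the combinatorial description is $\boldsymbol{e}_{n,k}({\bf x})$.

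The only subtle points are making sure the direct combinatorial expansion genuinely captures all contributing subsets (handled by the observation that zero entries kill their products) and that the bounds $j_s \le m_s$ are automatically enforced by the vanishing of $\binom{m_s}{j_s}$ for larger $j_s$, so we may sum over all nonnegative compositions without explicit bounds. I do not anticipate a real obstacle here: the result is essentially the multinomial expansion of the generating function $\prod_{s=1}^{l}(1+a_s T)^{m_s}$ extracted at $T^k$, and the recursion for $\Lambda$ is precisely the recursion that multiplies one more factor $(1+a_{l+1}T)^{m_{l+1}}$ into that product. The induction merely formalizes this generating-function identity.
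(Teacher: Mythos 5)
Your proof is correct, and it is a mild but genuine repackaging of the paper's argument rather than a copy of it. The paper inducts on $l$ by working directly with the evaluation of $\boldsymbol{e}_{n,k}$: it records the specialization identity $\boldsymbol{e}_{n,k}(X_1,\dots,X_{n-r},\alpha,\dots,\alpha)=\sum_{j}\binom{r}{j}\alpha^{j}\boldsymbol{e}_{n-r,k-j}(X_1,\dots,X_{n-r})$, invokes symmetry to move the block of $a_{l+1}$'s to the end, and matches the resulting recursion term-by-term against the defining recursion of $\Lambda$. You instead route both sides through an explicit intermediate identity, namely the coefficient of $T^{k}$ in $\prod_{s}(1+a_sT)^{m_s}$, i.e.
\begin{equation*}
\boldsymbol{e}_{n,k}({\bf x})=\sum_{j_1+\cdots+j_l=k}\ \prod_{s=1}^{l}\binom{m_s}{j_s}a_s^{j_s},
\end{equation*}
and then show by induction that unfolding the recursion for $\Lambda$ reproduces exactly this multinomial sum. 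The two arguments have the same combinatorial core (your grouping of $k$-subsets by how many indices land in each value class is precisely the paper's specialization identity), but yours makes the closed form explicit, which is arguably more illuminating here since it foreshadows the multinomial-sum representations that drive the rest of the paper, while the paper's version is shorter and avoids writing the composition sum. One small point to keep tidy in your write-up: your induction is on the identity for $\Lambda_{a_1,\dots,a_l}(k,\cdot)$ for \emph{all} integers $k$ simultaneously (including $k-j\le 0$, where $\Lambda$ is $0$ for negative degree and $1$ for degree $0$, matching the empty composition), not on the lemma statement itself, which is phrased only for positive $k$; you should state that convention explicitly so the inductive hypothesis applies to $\Lambda_{a_1,\dots,a_l}(k-j,\cdot)$.
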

\begin{proof}
First observe that if $l=1$, that is, ${\bf x} \in A_1^n$, then
\begin{equation}
 \boldsymbol{e}_{n,k}({\bf x}) = a_1^k\binom{m_1}{k}.
\end{equation}
Now observe that if the variables $X_n,X_{n-1},\cdots, X_{n-r+1}$ are set to be $\alpha$, then 
 \begin{equation}
  \boldsymbol{e}_{n,k}(X_1,\cdots, X_{n-r},\alpha,\cdots,\alpha)=\sum_{j=0}^r \binom{r}{j}\alpha^j\boldsymbol{e}_{n-r,k-j}(X_1,\cdots, X_{n-r}).
 \end{equation}
Symmetry and an induction argument  finish the proof.
\end{proof}

The above lemma can be used to express exponential sums of symmetric polynomials as a multi-sum of products of multinomial coefficients.

\begin{theorem}
\label{expsumformthm}
Let $n,k$ be natural numbers such that $k\leq n$, $p$ a prime and $q=p^r$ for some positive integer $r$. Suppose that $\mathbb{F}_q=\{0,\alpha_1,\cdots, \alpha_{q-1}\}$ is the Galois field of $q$ elements. 
Then,
\begin{eqnarray*}
S_{\mathbb{F}_q}(\boldsymbol{e}_{n,k})&=&\sum_{m_1=0}^n \sum_{m_2=0}^{n-m_1}\sum_{m_3=0}^{n-m_1-m_{2}}\cdots\sum_{m_{q-1=0}}^{n-m_{1}-\cdots-m_{q-2}} {n\choose m_0^*,m_1,m_2,\cdots, m_{q-1}}\\
&& \times \exp\left(\frac{2\pi i}{p} \Tr_{\mathbb{F}_q/\mathbb{F}_p}(\Lambda_{\alpha_1,\cdots, \alpha_{q-1}}(k,m_1,\cdots, m_{q-1})\right),
\end{eqnarray*}
where $m_{0}^* = n-(m_1+\cdots+m_{q-1})$.
\end{theorem}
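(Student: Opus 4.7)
The plan is to interpret the exponential sum as a sum over $\mathbf{x} \in \mathbb{F}_q^n$, then partition the vectors according to how many times each element of $\mathbb{F}_q$ appears as a coordinate. The symmetry of $\boldsymbol{e}_{n,k}$, captured by Lemma \ref{lemmaSymm}, will collapse each partition class to a single contribution weighted by a multinomial coefficient.

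First I would write out
\[
S_{\mathbb{F}_q}(\boldsymbol{e}_{n,k})=\sum_{\mathbf{x}\in \mathbb{F}_q^n} e^{\frac{2\pi i}{p}\Tr_{\mathbb{F}_q/\mathbb{F}_p}(\boldsymbol{e}_{n,k}(\mathbf{x}))}
\]
and enumerate $\mathbb{F}_q=\{0,\alpha_1,\dots,\alpha_{q-1}\}$. For a vector $\mathbf{x}\in \mathbb{F}_q^n$, let $m_j$ denote the number of coordinates of $\mathbf{x}$ equal to $\alpha_j$ for $j=1,\dots,q-1$, and let $m_0^* = n - (m_1+\cdots+m_{q-1})$ be the number of zero coordinates. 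Then I would partition $\mathbb{F}_q^n$ into equivalence classes $C_{m_1,\dots,m_{q-1}}$ of vectors sharing the same multiplicity profile; elementary counting gives $|C_{m_1,\dots,m_{q-1}}| = \binom{n}{m_0^*,m_1,\dots,m_{q-1}}$, and the tuples $(m_1,\dots,m_{q-1})$ range over non-negative integers with $m_1+\cdots+m_{q-1}\leq n$.

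Next I would invoke Lemma \ref{lemmaSymm}: since $\boldsymbol{e}_{n,k}$ is symmetric, every $\mathbf{x}\in C_{m_1,\dots,m_{q-1}}$ satisfies
\[
\boldsymbol{e}_{n,k}(\mathbf{x}) = \Lambda_{\alpha_1,\dots,\alpha_{q-1}}(k,m_1,\dots,m_{q-1}),
\]
so the entire exponential summand is constant on each class. Collecting the constant summand out of each class and replacing $|C_{m_1,\dots,m_{q-1}}|$ by its multinomial value produces precisely the nested sum stated in the theorem, with the summation ranges $0\le m_j \le n-m_1-\cdots-m_{j-1}$ coming from the constraint $m_0^*\ge 0$.

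The routine bookkeeping is really the only thing to verify: the indexing of the multiplicities, the fact that $\mathbb{F}_q^n$ is a disjoint union of the classes $C_{m_1,\dots,m_{q-1}}$, and that Lemma \ref{lemmaSymm} applies with $A_l = \mathbb{F}_q$ (i.e.\ $l = q-1$). There is no real obstacle here; the content of the theorem lies in having already proved Lemma \ref{lemmaSymm}. The only mild subtlety is to be careful that the field trace is applied to an element of $\mathbb{F}_q$, which is legitimate because $\Lambda_{\alpha_1,\dots,\alpha_{q-1}}(k,m_1,\dots,m_{q-1})$ is built by taking $\mathbb{F}_q$-linear combinations of powers of the $\alpha_j$ with integer coefficients, hence lies in $\mathbb{F}_q$ once the multinomial coefficients are reduced modulo $p$.
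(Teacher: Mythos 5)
Your proposal is correct and follows essentially the same route as the paper: partition $\mathbb{F}_q^n$ by multiplicity profiles, count each class with the multinomial coefficient, and use Lemma \ref{lemmaSymm} to see that the summand is constant on each class. No meaningful differences.
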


\begin{proof}
 Consider a tuple ${\bf x} \in \mathbb{F}_q^n$.  Suppose that $\alpha_j$ appears $m_j$ times in ${\bf x}$.  Clearly,
 this implies
 $$n=m_0^*+m_1+m_2+\cdots+m_{q-1}.$$
 A simple counting argument shows that there are
 \begin{equation}
 \label{multbinom}
  \binom{n}{m_1}\binom{n-m_1}{m_2}\binom{n-m_1-m_2}{m_3}\cdots \binom{n-m_1-m_2-\cdots-m_{q-2}}{m_{q-1}}
 \end{equation}
of such tuples.  This number can be written in multinomial form as
\begin{equation}
 {n\choose m_0^*,m_1,m_2,\cdots, m_{q-1}}.
\end{equation}
Observe that Lemma \ref{lemmaSymm} implies that the value of $\boldsymbol{e}_{n,k}$ on each of these tuples is 
\begin{equation}
 \boldsymbol{e}_{n,k}({\bf x})=\Lambda_{\alpha_1,\cdots, \alpha_{q-1}}(k,m_1,\cdots, m_{q-1}).
\end{equation}
Adding over all possible choices of $m_{1},m_{2},\cdots, m_{q-1}$ produces the result.
\end{proof}
An easy adjustment to the proof of Theorem \ref{expsumformthm} leads the following corollary.

\begin{corollary}
\label{expsumformcoro}
Let $1\leq k_1<k_2<\cdots<k_s$ and $n$ be positive integers, $p$ a prime and $q=p^r$ for some positive integer $r$. Suppose that $\mathbb{F}_q=\{0,\alpha_1,\cdots, \alpha_{q-1}\}$ is the Galois field of $q$ elements. 
Consider the symmetric function
$$\sum_{j=1}^s \beta_j \boldsymbol{e}_{n,k_j}\,\,\, \text{ where }\beta_j\in \mathbb{F}_q^{\times}.$$
Then,
\begin{eqnarray*}
S_{\mathbb{F}_q}\left(\sum_{j=1}^s \beta_j\boldsymbol{e}_{n,k_j}\right)&=&\sum_{m_1=0}^n \sum_{m_2=0}^{n-m_1}\sum_{m_3=0}^{n-m_1-m_{2}}\cdots\sum_{m_{q-1=0}}^{n-m_{1}-\cdots-m_{q-2}} {n\choose m_0^*,m_1,m_2,\cdots, m_{q-1}}\\
&& \times \exp\left(\frac{2\pi i}{p} \Tr_{\mathbb{F}_q/\mathbb{F}_p}\left(\sum_{j=1}^s \beta_j\Lambda_{\alpha_1,\cdots, \alpha_{q-1}}(k_j,m_1,\cdots, m_{q-1})\right)\right).
\end{eqnarray*}
\end{corollary}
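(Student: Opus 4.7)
The plan is to follow the proof of Theorem~\ref{expsumformthm} almost verbatim, with linearity providing the only additional ingredient. First I would partition $\mathbb{F}_q^n$ according to the multiplicity vector: for each tuple ${\bf x}\in\mathbb{F}_q^n$, let $m_j$ denote the number of coordinates of ${\bf x}$ equal to $\alpha_j$ (with $m_0^\ast$ the number of zero coordinates), so that $m_0^\ast+m_1+\cdots+m_{q-1}=n$. Exactly as in Theorem~\ref{expsumformthm}, the number of tuples with a prescribed multiplicity vector is the multinomial coefficient $\binom{n}{m_0^\ast,m_1,\ldots,m_{q-1}}$.

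The crucial observation is that each elementary symmetric polynomial $\boldsymbol{e}_{n,k_j}$ is symmetric, and therefore its value on ${\bf x}$ depends only on the multiplicity vector. Invoking Lemma~\ref{lemmaSymm} for each index $j=1,\ldots,s$ gives
\begin{equation*}
\boldsymbol{e}_{n,k_j}({\bf x})=\Lambda_{\alpha_1,\ldots,\alpha_{q-1}}(k_j,m_1,\ldots,m_{q-1}),
\end{equation*}
so by $\mathbb{F}_q$-linearity of the sum $\sum_j\beta_j\boldsymbol{e}_{n,k_j}$ we obtain
\begin{equation*}
\sum_{j=1}^s\beta_j\boldsymbol{e}_{n,k_j}({\bf x})=\sum_{j=1}^s\beta_j\Lambda_{\alpha_1,\ldots,\alpha_{q-1}}(k_j,m_1,\ldots,m_{q-1}).
\end{equation*}
This value is the same for every ${\bf x}$ in a given multiplicity class.

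To finish, I would apply $\Tr_{\mathbb{F}_q/\mathbb{F}_p}$ (which is $\mathbb{F}_p$-linear, but here it does not need to be distributed since the inner expression is already a single element of $\mathbb{F}_q$) and exponentiate, and then sum over the multiplicity vectors. Since the summand is constant on each class, each class contributes the multinomial coefficient times a single exponential, yielding precisely the claimed formula. There is no genuine obstacle: the only substantive point is recognizing that the $\mathbb{F}_q$-linear combination can be pushed inside the $\Lambda$-expressions before applying the trace, which is immediate from Lemma~\ref{lemmaSymm} and the definition of the exponential sum $S_{\mathbb{F}_q}$.
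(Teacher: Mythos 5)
Your proof is correct and follows exactly the route the paper takes: the paper's own proof of this corollary is a one-line remark that the argument of Theorem~\ref{expsumformthm} carries over, and your write-up is precisely that argument (partition by multiplicity vectors, count with the multinomial coefficient, apply Lemma~\ref{lemmaSymm} to each $\boldsymbol{e}_{n,k_j}$, and use linearity before taking the trace). Nothing is missing.
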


\begin{proof}
 The proof follows the same argument as in Theorem \ref{expsumformthm}.
\end{proof}

Theorem \ref{expsumformthm} and its corollary can be written in terms of partitions of $n$.  We say that $\boldsymbol{\lambda}= (\lambda_1,\cdots,\lambda_r)$ is a {\em partition} of $n$, and write $\boldsymbol{\lambda} \dashv n$, if the $\lambda_j$ are integers and
$$\lambda_1\geq \cdots \geq \lambda_r\geq 1 \,\, \text{ and }\,\,n=\lambda_1+\cdots+\lambda_r.$$
The notation $\boldsymbol{\lambda} \dashv_q n$ implies that $\boldsymbol{\lambda}$ is a partition of $n$ and has at most $q$ entries.  For example, if $\boldsymbol{\lambda}=(6,3,1)$, then
$\boldsymbol{\lambda} \dashv_4 10$ because it has 3 entries and $3\leq 4$.  On the other hand, if $\boldsymbol{\lambda} =(4,2,2,1,1)$, then  $\boldsymbol{\lambda} \dashv 10$, but $\boldsymbol{\lambda} \not \dashv_4 10$.  From now on, we will see partitions $\boldsymbol{\lambda} \dashv_q n$ as lists of length $q$.  Of course, by definition, a partition $\boldsymbol{\lambda} \dashv_q n$ may have less than $q$ entries.  If that is the case, right-pad zeros to the list until it has $q$ entries.  For example, $\boldsymbol{\lambda}=(6,3,1)$ is such that $\boldsymbol{\lambda} \dashv_4 10$.  In this case, we view $\boldsymbol{\lambda}$ as $\boldsymbol{\lambda} =(6,3,1,0)$.

If $\boldsymbol{\lambda}  \dashv n$, then the symbol
\begin{equation*}
\binom{n}{\boldsymbol{\lambda}}
\end{equation*}
represents the multinomial obtained from $\boldsymbol{\lambda}$. For example, if $\boldsymbol{\lambda} =(6,3,1)$, then
$$\binom{10}{\boldsymbol{\lambda}}={10 \choose 6,3,1}.$$
By a {\em rearrangement} of  $\boldsymbol{\lambda}$ we mean a permutation of the symbols in $\boldsymbol{\lambda}$.  For example, the set of all different rearrangements of  $\boldsymbol{\lambda} =(2,2,1,1)$ is
\begin{eqnarray*}
(2, 2, 1, 1), & (2, 1, 2, 1)\\
(2, 1, 1, 2), & (1, 2, 2, 1)\\
(1, 2, 1, 2), & (1, 1, 2, 2).
\end{eqnarray*}
We use $\Sym(\boldsymbol{\lambda})$ to denote the set of all rearrangements of $\boldsymbol{\lambda}$.  Finally, if $\boldsymbol{\gamma}$ is a non-empty list, then $\boldsymbol{\gamma}^*$ is the list obtained 
from $\boldsymbol{\gamma}$ by removing the first element.  For example, if $\boldsymbol{\gamma}=(2,2,1,1)$, then $\boldsymbol{\gamma}^*=(2,1,1)$.  Theorem \ref{expsumformthm} 
and Corollary \ref{expsumformcoro} can be re-stated as follows.
\begin{theorem}
\label{thmpart}
Let $n,k$ be natural numbers such that $k\leq n$, $p$ a prime and $q=p^r$ for some positive integer $r$. Suppose that $\mathbb{F}_q=\{0,\alpha_1,\cdots, \alpha_{q-1}\}$ is the Galois field of $q$ elements. 
Then,
\begin{eqnarray*}
S_{\mathbb{F}_q}(\boldsymbol{e}_{n,k})&=&\sum_{\boldsymbol{\lambda}\dashv_q n} \binom{n}{\boldsymbol{\lambda}} \sum_{\boldsymbol{\gamma}\in \Sym(\boldsymbol{\lambda})}
 \exp\left(\frac{2\pi i}{p} \Tr_{\mathbb{F}_q/\mathbb{F}_p}(\Lambda_{\alpha_1,\cdots, \alpha_{q-1}}(k,\boldsymbol{\gamma}^*)\right).
\end{eqnarray*}
\end{theorem}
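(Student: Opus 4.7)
The plan is to recognize Theorem \ref{thmpart} as a pure re-indexing of Theorem \ref{expsumformthm}, where the summation over ordered $q$-tuples is grouped according to the underlying unordered multiset, i.e., the partition of $n$ that the tuple determines.

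First, I would observe that the nested sum over $m_1, m_2, \ldots, m_{q-1}$ in Theorem \ref{expsumformthm}, together with the convention $m_0^* = n-(m_1+\cdots+m_{q-1})$, is nothing more than a sum over all ordered $q$-tuples $(m_0^*, m_1, \ldots, m_{q-1})$ of non-negative integers summing to $n$. Each such tuple, after sorting its entries in non-increasing order (and trimming trailing zeros), gives rise to a unique partition $\boldsymbol{\lambda} \dashv_q n$; conversely, with the right-padding convention described in the excerpt, every $\boldsymbol{\lambda} \dashv_q n$ is a length-$q$ list and every rearrangement $\boldsymbol{\gamma} \in \Sym(\boldsymbol{\lambda})$ recovers exactly one such ordered tuple. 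This sets up a bijection between the index set of Theorem \ref{expsumformthm} and the set of pairs $(\boldsymbol{\lambda}, \boldsymbol{\gamma})$ with $\boldsymbol{\lambda} \dashv_q n$ and $\boldsymbol{\gamma} \in \Sym(\boldsymbol{\lambda})$.

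Second, I would note that the multinomial coefficient $\binom{n}{m_0^*, m_1, \ldots, m_{q-1}}$ depends only on the multiset of entries, not on their order, so under the above bijection it equals $\binom{n}{\boldsymbol{\lambda}}$ for every $\boldsymbol{\gamma} \in \Sym(\boldsymbol{\lambda})$. This factor can therefore be pulled out in front of the inner sum over rearrangements.

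Third, the exponential factor is sensitive to the ordering, since $\Lambda_{\alpha_1, \ldots, \alpha_{q-1}}(k, m_1, \ldots, m_{q-1})$ uses the counts assigned to $\alpha_1, \ldots, \alpha_{q-1}$ in their listed order. Under the bijection, these counts are exactly the last $q-1$ entries of $\boldsymbol{\gamma}$, i.e., the entries of $\boldsymbol{\gamma}^*$; the first entry of $\boldsymbol{\gamma}$ plays the role of $m_0^*$ and does not enter the exponential. Consequently the contribution to $S_{\mathbb{F}_q}(\boldsymbol{e}_{n,k})$ from all ordered tuples corresponding to $\boldsymbol{\lambda}$ is $\binom{n}{\boldsymbol{\lambda}} \sum_{\boldsymbol{\gamma} \in \Sym(\boldsymbol{\lambda})} \exp\!\bigl(\tfrac{2\pi i}{p}\Tr_{\mathbb{F}_q/\mathbb{F}_p}(\Lambda_{\alpha_1,\ldots,\alpha_{q-1}}(k,\boldsymbol{\gamma}^*))\bigr)$, and summing over $\boldsymbol{\lambda} \dashv_q n$ yields the desired formula.

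There is no real obstacle here; the argument is a routine combinatorial re-indexing and the only point that merits care is checking the book-keeping around the padding convention (so that partitions with fewer than $q$ parts are treated consistently as length-$q$ lists) and around the first coordinate of $\boldsymbol{\gamma}$, which is peeled off by the star operation to match the role of $m_0^*$ in Theorem \ref{expsumformthm}.
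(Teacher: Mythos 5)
Your proposal is correct and is essentially the paper's own (implicit) argument: the paper presents Theorem \ref{thmpart} as a direct re-statement of Theorem \ref{expsumformthm}, obtained by exactly the re-indexing you describe, grouping the ordered tuples $(m_0^*,m_1,\ldots,m_{q-1})$ by the partition $\boldsymbol{\lambda}\dashv_q n$ they determine and using the symmetry of the multinomial coefficient. Your attention to the zero-padding convention and to the role of the first coordinate peeled off by the star operation is exactly the right book-keeping.
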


\begin{corollary}
\label{coropart}
Let $1\leq k_1<k_2<\cdots<k_s$ and $n$ be positive integers, $p$ a prime and $q=p^r$ for some positive integer $r$. Suppose that $\mathbb{F}_q=\{0,\alpha_1,\cdots, \alpha_{q-1}\}$ is the Galois field of $q$ elements. 
Consider the symmetric function
$$\sum_{j=1}^s \beta_j \boldsymbol{e}_{n,k_j}\,\,\, \text{ where }\beta_j\in \mathbb{F}_q^{\times}.$$
Then,
\begin{eqnarray*}
S_{\mathbb{F}_q}\left(\sum_{j=1}^s \beta_j\boldsymbol{e}_{n,k_j}\right)&=&\sum_{\boldsymbol{\lambda}\dashv_q n} \binom{n}{\boldsymbol{\lambda}} \sum_{\boldsymbol{\gamma}\in \Sym(\boldsymbol{\lambda})}
 \exp\left(\frac{2\pi i}{p} \Tr_{\mathbb{F}_q/\mathbb{F}_p}\left(\sum_{j=1}^s \beta_j\Lambda_{\alpha_1,\cdots, \alpha_{q-1}}(k_j,\boldsymbol{\gamma}^*)\right)\right).
\end{eqnarray*}
\end{corollary}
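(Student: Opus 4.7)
The plan is to obtain Corollary~\ref{coropart} as a direct reindexing of Corollary~\ref{expsumformcoro}, by grouping the multi-indexed sum into orbits under permutation of coordinates. First I would observe that the multi-sum on the right-hand side of Corollary~\ref{expsumformcoro} runs over all ordered $(q-1)$-tuples $(m_1,\ldots,m_{q-1})$ of non-negative integers with $m_1+\cdots+m_{q-1}\le n$; setting $m_0^*=n-(m_1+\cdots+m_{q-1})$ identifies these with ordered $q$-tuples $(m_0^*,m_1,\ldots,m_{q-1})$ of non-negative integers summing to $n$.

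The next step is the bijective correspondence: any such ordered $q$-tuple, once its entries are sorted weakly decreasingly, produces a unique partition $\boldsymbol{\lambda}\dashv_q n$ (padded with trailing zeros to length $q$, per the convention introduced just before the statement). Conversely, for a fixed partition $\boldsymbol{\lambda}\dashv_q n$, the set of ordered $q$-tuples of non-negative integers that sort to $\boldsymbol{\lambda}$ is exactly $\Sym(\boldsymbol{\lambda})$. This organizes the multi-index set into orbits indexed by partitions of $n$ of length at most $q$.

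I would then verify that both the weight and the exponential factor are compatible with this grouping. The multinomial coefficient $\binom{n}{m_0^*,m_1,\ldots,m_{q-1}}$ depends only on the underlying multiset of lower entries, so it equals $\binom{n}{\boldsymbol{\lambda}}$ for every $\boldsymbol{\gamma}=(m_0^*,m_1,\ldots,m_{q-1})\in \Sym(\boldsymbol{\lambda})$; and with this identification $\boldsymbol{\gamma}^*=(m_1,\ldots,m_{q-1})$, so $\Lambda_{\alpha_1,\ldots,\alpha_{q-1}}(k_j,m_1,\ldots,m_{q-1})=\Lambda_{\alpha_1,\ldots,\alpha_{q-1}}(k_j,\boldsymbol{\gamma}^*)$ for each $j$, and the trace-and-exponential wrapper passes through linearly in the $\beta_j$.

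Assembling these observations, the nested sum of Corollary~\ref{expsumformcoro} becomes a double sum, first over partitions $\boldsymbol{\lambda}\dashv_q n$ and then over $\boldsymbol{\gamma}\in \Sym(\boldsymbol{\lambda})$, which is precisely the right-hand side of Corollary~\ref{coropart}. There is no substantive obstacle: the content is pure bookkeeping, and the only place one must be careful is in confirming that the distinguished role of $m_0^*$ (the count of zero entries in the argument tuple, which is \emph{not} an argument of $\Lambda$) is correctly encoded by the ``remove the first element'' convention defining $\boldsymbol{\gamma}^*$.
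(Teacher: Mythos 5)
Your proposal is correct and matches the paper's (implicit) argument: the paper presents Corollary \ref{coropart} simply as a re-statement of Corollary \ref{expsumformcoro} obtained by grouping the ordered multi-index tuples into permutation orbits indexed by partitions $\boldsymbol{\lambda}\dashv_q n$, exactly the bookkeeping you carry out. Your explicit attention to the role of $m_0^*$ and the $\boldsymbol{\gamma}^*$ convention is the only point requiring care, and you handle it correctly.
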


For small $q$, Theorem \ref{expsumformthm} and the recursive nature of $\Lambda_{a_1,\cdots,a_l}$ can be used to speed up the computation of $S_{\mathbb{F}_q}(\boldsymbol{e}_{n,k})$.  Theorem 
\ref{expsumformthm} and Corollary \ref{expsumformcoro} also offers a hint to a problem similar to bisections of binomial coefficients for multinomial coefficients. Emulating the binary case, we define 
$(p,q)$-{\it section} of multinomial coefficients ($q$ being a power of $p$) to be the process of dividing the list
\begin{equation}
 \mathcal{L}(n;q)=\left\{{n\choose m_0,m_1,m_2,\cdots, m_{q-1}^*}\right\},
\end{equation}
where the indices run
$$0\leq m_0 \leq n, 0\leq m_1\leq n-m_0,\cdots,0\leq m_{q-2}\leq n-m_0-m_1-\cdots-m_{q-3},$$
into $p$ sublists, $l_j(n;q), 1\leq j \leq p$, such that the sum on each sublist is the same. This common sum must be $q^{n-1}$. Observe that every time 
$S_{\mathbb{F}_q}(\beta_1\boldsymbol{e}_{n,k_1}+\cdots+\beta_s\boldsymbol{e}_{n,k_s})=0$ we obtain
a $(p,q)$-section to of multinomial coefficients.  This connection generalizes the one that exists between bisections of binomial coefficients and symmetric Boolean functions.
\begin{example}
The elementary symmetric polynomial $\boldsymbol{e}_{5,3}$ is such that $S_{\mathbb{F}_3}(\boldsymbol{e}_{5,3})=0$.  
Observe that
\begin{equation}
 \mathcal{L}(5;3) =\{1, 5, 10, 10, 5, 1, 5, 20, 30, 20, 5, 10, 30, 30, 10, 10, 20, 10, 5, 5, 1\}.
\end{equation}
The 3-section that corresponds to
$\boldsymbol{e}_{5,3}$ over $\mathbb{F}_3$ is
\begin{eqnarray}
 l_1(5;3) &=& \{1, 5, 5, 10, 10, 20, 30\}\\\nonumber
 l_2(5;3) &=& \{1, 5, 5, 10, 10, 20, 30\}\\\nonumber
 l_3(5;3) &=& \{1, 5, 5, 10, 10, 20, 30\}.
\end{eqnarray} 
\end{example}

\begin{example}
The symmetric polynomial $\boldsymbol{e}_{6,5}+\boldsymbol{e}_{6,3}$ also satisfies $S_{\mathbb{F}_3}(\boldsymbol{e}_{6,5}+\boldsymbol{e}_{6,3})=0$.  
In this case,
\begin{equation}
 \mathcal{L}(6;3) =\{1, 6, 15, 20, 15, 6, 1, 6, 30, 60, 60, 30, 6, 15, 60, 90, 60, 15, 20, 60, 60, 20, 15, 30, 15, 6, 6, 1\}.
\end{equation}
The 3-section that corresponds to $\boldsymbol{e}_{6,5}+\boldsymbol{e}_{6,3}$ over $\mathbb{F}_3$ is
\begin{eqnarray}
\label{3sec}
 l_1(6;3) &=& \{1, 6, 6, 15, 15, 20, 30, 30, 30, 90\}\\\nonumber
 l_2(6;3) &=& \{1, 6, 6, 15, 15, 20, 60, 60, 60\}\\\nonumber
 l_3(6;3) &=& \{1, 6, 6, 15, 15, 20, 60, 60, 60\}.
\end{eqnarray}  
\end{example}

As in the Boolean case, we may try to define trivial $(p,q)$-sections.  A possible way to do this is to say that a $(p,q)$-section is trivial if $l_1(n;k)=l_2(n;k)=\cdots=l_{p}(n;k)$.   Again, following the
binary case, we say that a symmetric polynomial $\beta_1\boldsymbol{e}_{n,k_1}+\cdots+\beta_s\boldsymbol{e}_{n,k_s}$ is trivially balanced over $\mathbb{F}_q$ if its related $(p,q)$-section is trivial.
For example, $\boldsymbol{e}_{5,3}$ is trivially balanced, while $\boldsymbol{e}_{6,5}+\boldsymbol{e}_{6,3}$ is not.  It would be interesting to know if some results known for the binary case also apply to this problem.  

Exponential sums of linear combinations of elementary symmetric polynomials are also linked, via Theorem \ref{thmpart} and Corollary \ref{coropart}, to the Diophantine equation
\begin{equation}
\label{diopheq}
 \sum_{\boldsymbol{\lambda}\dashv_q n} \binom{n}{\boldsymbol{\lambda}} x_{\boldsymbol{\lambda}}=0.
\end{equation}
Observe that every time 
$$S_{\mathbb{F}_q}\left(\sum_{j=1}^s \beta_j \boldsymbol{e}_{n,k_j}\right)=0,$$
we find a solution to (\ref{diopheq}).

\begin{example}
 Consider $\mathbb{F}_4=\{0,1,\alpha,\alpha+1\}$ where $\alpha^2=\alpha+1$.  The symmetric polynomial 
 $$(1+\alpha)\boldsymbol{e}_{n,3}+(1+\alpha)\boldsymbol{e}_{n,2}+\alpha\boldsymbol{e}_{n,1}$$
 is such that
\begin{equation}
\label{diopheqex}
S_{\mathbb{F}_4}\left((1+\alpha)\boldsymbol{e}_{8,3}+(1+\alpha)\boldsymbol{e}_{8,2}+\alpha\boldsymbol{e}_{8,1}\right)=0.
\end{equation}
Therefore, we have a solution to (\ref{diopheq}) for $n=8$ and $q=4$.  The integer partitions $\boldsymbol{\lambda}$  of $8$ that satisfies $\boldsymbol{\lambda}\dashv_4 8$ are
$$ \begin{array}{llll}
 \boldsymbol{\lambda}_1=(8), &  \boldsymbol{\lambda}_2=(7, 1), & \boldsymbol{\lambda}_3= (6, 2), & \boldsymbol{\lambda}_4=(6, 1, 1),\\
 \boldsymbol{\lambda}_5=(5, 3),& \boldsymbol{\lambda}_6=(5, 2, 1), & \boldsymbol{\lambda}_7=(5, 1, 1, 1),& \boldsymbol{\lambda}_8= (4, 4),\\
 \boldsymbol{\lambda}_9=(4, 3, 1),& \boldsymbol{\lambda}_{10}= (4, 2, 2),& \boldsymbol{\lambda}_{11}= (4, 2, 1, 1),& \boldsymbol{\lambda}_{12}= (3, 3, 2),\\
 \boldsymbol{\lambda}_{13}=(3, 3, 1, 1),& \boldsymbol{\lambda}_{14}=(3,2, 2, 1),& \boldsymbol{\lambda}_{15}=(2, 2, 2, 2). & 
 \end{array}
$$
The solution to (\ref{diopheq}) provided by (\ref{diopheqex}) is given by
$$
(\delta_1,\delta_2,\cdots,\delta_{15})=(4, -4, -4, 4, -4, 8, -4, 6, -8, -4, 4, 4, 2, -4, 1).
$$
In other words,
$$\sum_{j=1}^{15} \binom{8}{\boldsymbol{\lambda}_j} \delta_j=0.$$
\end{example}
A natural problem to explore is to see how solutions to (\ref{diopheq}) given by exponential sums of linear combinations of elementary symmetric polynomials look like as $n$ grows.  Perhaps something similar to 
the study presented in \cite{cgm2} holds true in this case.  This is part of future research.

In the next section, we prove closed formulas for exponential sums of symmetric polynomials
over Galois fields.  Moreover, we provide explicit linear recurrences with integer coefficients for these exponential sums.


\section{Closed formulas for exponential sums of symmetric polynomials}
\label{closedformulas}
In this section we generalize Theorem \ref{invariant}, that is, we provide closed formulas for the exponential sums considered in this article.  These formulas, in turn, allow us to find explicit recursions for these sequences.  Our formulas depend on circulant matrices and on periodicity.  Thus, we start with a short background on these topics.

Let $D$ be a positive integer and $\alpha=\left(c_{0},c_{1},\ldots,c_{D-1}\right)\in\mathbb{C}^{D}$.  The $D$-{\em circulant matrix} associated to $\alpha$, denoted by $\text{circ}(\alpha)$, is defined by

\begin{equation}
\text{circ}(\alpha):=\left(\begin{array}{ccccc}
c_{0} & c_{1} & \cdots & c_{D-2} & c_{D-1}\\
c_{D-1} & c_{0} & \cdots & c_{D-1} & c_{D-2}\\
\vdots & \vdots & \ddots & \vdots & \vdots\\
c_{2} & c_{3} & \cdots & c_{0} & c_{1}\\
c_{1} & c_{2} & \cdots & c_{D-1} & c_{0}
\end{array}\right).
\end{equation}
The polynomial $p_{\alpha}(X)=c_0+c_1 X+\cdots+ c_{D-1} X^{D-1}$ is called the {\em associated polynomial} of the circulant matrix.  In the literature, this polynomial is also called {\em representer polynomial}.
Observe that if
\begin{equation}
 \pi = \left(
\begin{array}{rrrrrr}
 0 & 1 & 0 & \cdots  & 0 & 0 \\
 0 & 0 & 1 & \cdots  & 0 & 0 \\
 \vdots & \vdots & \vdots & \ddots & \vdots & \vdots\\
 0 & 0 & 0 &  \cdots & 0 & 1 \\
 1 & 0 & 0 & \cdots & 0 & 0 \\
\end{array}
\right),
\end{equation}
then $\text{circ}(\alpha) = p_{\alpha}(\pi)$.

Circulant matrices are well-understood objects.  For example, it is known that the (normalized) eigenvectors of any circulant matrix $\text{circ}(\alpha)$ are given by
\begin{equation}
v_j=\frac{1}{\sqrt{n}}(1,\omega_j,\omega_j^2,\cdots, \omega_j^{D-1})^T,
\end{equation}
where $\omega_j=\exp\left(2\pi \mathrm{i} j/D\right)$ and $i=\sqrt{-1}$, with corresponding eigenvalues
\begin{equation}
\lambda_j(\alpha) = p_{\alpha}(\omega_j)=c_0+c_1 \omega_j+c_2 \omega_j^2+\cdots+c_{D-1} \omega_j^{D-1}.
\end{equation}
Moreover, any circulant matrix $\text{circ}(\alpha)$ can be diagonalized in the following form.  Consider the {\it Discrete Fourier Transform} matrix
\begin{equation}
 F_n = \left(
\begin{array}{rrrr}
 \xi_n^{0\cdot 0} & \xi_n^{0\cdot 1} & \cdots &  \xi_n^{0\cdot (n-1)} \\
 \xi_n^{1\cdot 0} & \xi_n^{1\cdot 1} & \cdots &  \xi_n^{1\cdot (n-1)} \\
 \vdots & \vdots &  \ddots & \vdots \\
 \xi_n^{(n-1)\cdot 0} & \xi_n^{(n-1)\cdot 1} & \cdots  & \xi_n^{(n-1)\cdot (n-1)} \\
\end{array}
\right),
\end{equation}
where $\xi_n=\exp(-2\pi i/n)$.  Let $U_n=(1/\sqrt{n})F_n$ be its normalization and define
\begin{equation}
 \Delta(\alpha) = \text{diag}(\lambda_0(\alpha),\lambda_1(\alpha),\cdots,\lambda_{D-1}(\alpha)).
\end{equation}
Then,
\begin{equation}
\label{diagon}
 \text{circ}(\alpha) = U_D \Delta(\alpha) U_D^*.
\end{equation}
See \cite[Th.3.2.2, p. 72]{Davis} for more information.

We say that a function $f:\mathbb{Z}\rightarrow\mathbb{Z}$ is {\em periodic} with period $D$ if $f(j+D)=f(j)$
for any $j\in\mathbb{Z}$.  Periodicity can be extended to functions $g:\mathbb{Z}\times \mathbb{Z}\to\mathbb{Z}$ without too much effort.  The periodicity of a function  $g:\mathbb{Z}\times \mathbb{Z}\rightarrow\mathbb{Z}$ is usually divided by components.  We say that a positive integer $D_1$ is a {\it period in the first component of} $g$ if 
\begin{equation}
g(j_1+D_1,j_2)=g(j_1,j_2) 
\end{equation}
for every $j_1,j_2 \in \mathbb{Z}$.  Similarly, we say that a positive integer $D_2$ is a {\it period in the second component of} $g$ if 
\begin{equation}
g(j_1,j_2+D_2)=g(j_1,j_2)
\end{equation} 
for every $j_1,j_2 \in \mathbb{Z}$. Of course, if $g$ is periodic in its first and second components, then we say that $g$ is periodic.  Moreover, $D=\lcm(D_1,D_2)$ is such that
\begin{equation}
g(j_1+D,j_2+D)=g(j_1,j_2)
\end{equation}
for every $j_1,j_2 \in \mathbb{Z}$.  The concept of periodicity can be extended further to functions from $\mathbb{Z}\times\mathbb{Z}\times\cdots\times \mathbb{Z}$ to $\mathbb{Z}$.  The discussion is the same as for the case $\mathbb{Z}\times \mathbb{Z}$, so we do not write the details.

We are now ready to start with the argument for our formulas. 
Consider the summation
\begin{equation}
\label{sumofint}
\sum_{l=0}^n a^l\binom{n}{l}.
\end{equation}
Later it will become clear why we choose this sum.  Given a positive integer $D>1$, the sum (\ref{sumofint}) can be splitted as 
\begin{equation}
\sum_{l=0}^n a^l\binom{n}{l} = \sum_{t=0}^{D-1} r_t(n;a),
\end{equation}
where 
\begin{equation}
r_t(n;a)=\sum_{j\equiv t \,\,\text{mod }D} a^j\binom{n}{j}.
\end{equation}

\begin{proposition}
\label{closedformprop}
Let $n\in \mathbb{N}$ and $0\leq t \leq D-1$.  Then,
\begin{equation}
r_t(n;a) = \frac{1}{D}\sum_{m=0}^{D-1} \xi_{D}^{t m} \lambda_{m}^n,
\end{equation}
where $\xi_D=\exp(2\pi i/D)$ and $\lambda_m = 1+a\xi_D^{-m}$ are the eigenvalues of $\CIRC(1,0,\cdots, 0,a)$.
\end{proposition}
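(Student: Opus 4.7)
The plan is to prove this via the classical roots of unity filter applied to the binomial expansion of $(1+a)^n$, with the circulant interpretation serving only to identify the final quantities. The appearance of $\CIRC(1,0,\ldots,0,a)$ in the statement is no coincidence: its associated polynomial is $p_\alpha(X) = 1 + aX^{D-1}$, and evaluated at $\omega_m = \xi_D^m$ this gives $p_\alpha(\omega_m) = 1 + a\xi_D^{-m}$, which is exactly the $\lambda_m$ appearing in the target formula. So the real work is to prove the identity
\begin{equation*}
\sum_{j\equiv t \,\text{mod }D} a^j \binom{n}{j} \;=\; \frac{1}{D} \sum_{m=0}^{D-1} \xi_D^{tm}\,(1+a\xi_D^{-m})^n,
\end{equation*}
and then read off that the right-hand side equals $\frac{1}{D}\sum_m \xi_D^{tm}\lambda_m^n$.

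My first step is to represent the congruence condition $j\equiv t \pmod D$ via the standard orthogonality relation for $D$-th roots of unity, namely
\begin{equation*}
\mathbf{1}[j\equiv t\ \text{mod }D] \;=\; \frac{1}{D}\sum_{m=0}^{D-1} \xi_D^{m(j-t)}.
\end{equation*}
Plugging this indicator into $r_t(n;a)$ and swapping the finite sums yields
\begin{equation*}
r_t(n;a) \;=\; \frac{1}{D}\sum_{m=0}^{D-1} \xi_D^{-tm}\sum_{j=0}^{n}\binom{n}{j}(a\xi_D^{m})^{j}.
\end{equation*}
The inner sum is, by the binomial theorem, simply $(1+a\xi_D^{m})^n$.

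The remaining step is a cosmetic reindexing $m\mapsto D-m$, which sends $\xi_D^m\mapsto \xi_D^{-m}$ and $\xi_D^{-tm}\mapsto \xi_D^{tm}$ (both legitimate because the index ranges over a complete residue system modulo $D$). This gives
\begin{equation*}
r_t(n;a) \;=\; \frac{1}{D}\sum_{m=0}^{D-1}\xi_D^{tm}(1+a\xi_D^{-m})^n \;=\; \frac{1}{D}\sum_{m=0}^{D-1}\xi_D^{tm}\lambda_m^n,
\end{equation*}
which is the claimed formula. There is no substantive obstacle here: the argument is just the roots-of-unity filter. The only thing to watch is the sign convention on the exponent of $\xi_D$, which must be chosen consistently so that the quantity $1+a\xi_D^{-m}$ matches the eigenvalue $\lambda_m = p_\alpha(\omega_m)$ of the circulant matrix exactly as defined in the surrounding text.
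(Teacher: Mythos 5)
Your proof is correct, but it follows a genuinely different route from the paper's. You use the orthogonality relation $\frac{1}{D}\sum_{m=0}^{D-1}\xi_D^{m(j-t)}=\mathbf{1}[j\equiv t \bmod D]$ to filter the binomial expansion of $(1+a\xi_D^m)^n$, and then a harmless reindexing $m\mapsto D-m$ to match the stated sign convention; every step (swapping finite sums, the binomial theorem, the bijectivity of $m\mapsto D-m$ on residues mod $D$) is sound, and you correctly identify $1+a\xi_D^{-m}$ as $p_\alpha(\xi_D^m)$ for $\alpha=(1,0,\ldots,0,a)$, i.e.\ as the eigenvalues of $\CIRC(1,0,\ldots,0,a)$. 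The paper instead follows the Cai--Green--Thierauf template: it derives the first-order recursions $r_t(n;a)=r_{t}(n-1;a)+a\,r_{t-1}(n-1;a)$, packages them as $\boldsymbol{r}(n;a)=A_D(a)\,\boldsymbol{r}(n-1;a)$ with $A_D(a)=\CIRC(1,0,\ldots,0,a)$, iterates to $A_D(a)^n\boldsymbol{r}(0;a)$, and extracts the closed form from the DFT diagonalization (\ref{diagon}). Your argument is shorter, more elementary, and avoids the matrix machinery entirely; the paper's argument buys a structural explanation for why circulant eigenvalues appear (rather than observing it after the fact, as you do) and makes the linear-recurrence interpretation of $\{r_t(n;a)\}_n$ immediate, which is the theme the rest of Section \ref{closedformulas} develops. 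Both proofs establish the proposition, and either one supports the downstream Corollaries \ref{corocloseda} and \ref{coroclosed} equally well.
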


\begin{proof}
The approach of this proof is similar to the one presented in \cite{cai}.  Note that for $1\leq t \leq D-1$, we have
\begin{equation}
r_t(n;a) = r_t(n-1;a)+a\,r_{t-1}(n-1;a).
\end{equation}
Also,
\begin{equation}
r_0(n;a) = r_0(n-1;a)+a\,r_{D-1}(n-1;a).
\end{equation}
Therefore, if we define
\begin{equation}
 \boldsymbol{r}(n;a)=\left(
\begin{array}{c}
 r_0(n;a) \\
 r_1(n;a) \\
 \vdots \\
 r_{D-1}(n;a) \\
\end{array}
\right),
\end{equation}
then
\begin{eqnarray}
 \boldsymbol{r}(n;a)&=&\left(
\begin{array}{cccccc}
 1 & 0 & 0 & \cdots & 0 & a \\
 a & 1 & 0 & \cdots & 0 & 0 \\
 0 & a & 1 & \cdots & 0 & 0 \\
 \vdots & \vdots & \vdots & \ddots & \vdots & \vdots\\
 0 & 0 & 0 & \cdots & a & 1 \\
\end{array}
\right) \boldsymbol{r}(n-1;a).
\end{eqnarray}
Let $\alpha=(1,0,\cdots, 0, a)$.  The last equation is equivalent to
\begin{equation}
\label{matrec}
 \boldsymbol{r}(n;a)= A_D(a)\boldsymbol{r}(n-1;a),
\end{equation}
where $A_D(a)=\CIRC(\alpha)$.

Iteration of (\ref{matrec}) leads to $\boldsymbol{r}(n;a)= A_D(a)^n\boldsymbol{r}(0;a)$.  Observe that
\begin{equation}
 r_0(0;a)=\binom{0}{0} = 1 \,\, \text{ and }\,\, r_t(0;a)=0 \text{ for }t>0.
\end{equation}
Thus,
\begin{equation}
\boldsymbol{r}(0;a)=\left(\begin{array}{c}
 1 \\
 0 \\
 \vdots \\
 0 \\
\end{array}
\right).
\end{equation}
Equation (\ref{diagon}) now implies that 
\begin{eqnarray}
 \boldsymbol{r}(n;a) &=& \frac{1}{D} U_D \Delta(\alpha)^n U_D^* \boldsymbol{r}(0;a)=\frac{1}{D} U_D \Delta(\alpha)^n\left(\begin{array}{c}
 1 \\
 1 \\
 \vdots \\
 1 \\
\end{array}
\right)\\\nonumber
&=& \frac{1}{D} U_D \left(\begin{array}{c}
 \lambda_0(\alpha)^n \\
 \lambda_1(\alpha)^n \\
 \vdots \\
 \lambda_{D-1}(\alpha)^n \\
\end{array}
\right)= \left(\begin{array}{c}
 \frac{1}{D}\sum_{j=0}^{D-1}\xi_D^{(s-1)j}\lambda_0(\alpha)^n\\
 \frac{1}{D}\sum_{j=0}^{D-1}\xi_D^{(s-1)j}\lambda_1(\alpha)^n\\
 \vdots \\
 \frac{1}{D}\sum_{j=0}^{D-1}\xi_D^{(s-1)j}\lambda_{D-1}(\alpha)^n\\
\end{array}
\right).
\end{eqnarray}
It follows that 
\begin{equation}
 r_t(n;a)=\frac{1}{D}\sum_{j=0}^{D-1}\xi_D^{tj}\lambda_{j}(\alpha)^n
\end{equation}
where $\lambda_j(\alpha) = 1+a\xi_D^{-j}$.
\end{proof}

The following results are easy consequences of the above proposition.

\begin{corollary}
\label{corocloseda}
Let $F$ be a periodic function with period $D$.  Suppose that $\xi^D=1$ (not necessarily primitive).  Then,
 \begin{equation}
  \sum_{l=0}^n \binom{n}{l}a^l\xi^{F(l)} =\frac{1}{D}\sum_{t=0}^{D-1}\xi^{F(t)}\sum_{j=0}^{D-1}\xi_D^{tj}\lambda_j^n,
 \end{equation}
where $\xi_D=\exp(2\pi i/D)$ and $\lambda_j=1+a\xi_D^{-j}$, for $0\leq j \leq D-1$, are the eigenvalues of $\CIRC(1,0,\cdots, 0,a)$.
\end{corollary}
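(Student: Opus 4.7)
The plan is to reduce this corollary directly to Proposition \ref{closedformprop} by exploiting the periodicity of $F$. First I would split the sum $\sum_{l=0}^n \binom{n}{l}a^l \xi^{F(l)}$ according to the residue of the index modulo $D$. Since $F$ has period $D$, the value $\xi^{F(l)}$ depends only on $l \bmod D$; so for each $t$ with $0 \le t \le D-1$, the weight $\xi^{F(l)}$ is constant on the arithmetic progression $\{l : l \equiv t \pmod D\}$ and equals $\xi^{F(t)}$.

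After this grouping the sum becomes
\[
\sum_{l=0}^n \binom{n}{l}a^l \xi^{F(l)} = \sum_{t=0}^{D-1} \xi^{F(t)} \sum_{l \equiv t \,\mathrm{mod}\, D} \binom{n}{l}a^l = \sum_{t=0}^{D-1} \xi^{F(t)}\, r_t(n;a),
\]
where $r_t(n;a)$ is exactly the quantity analyzed in Proposition \ref{closedformprop}. I would then invoke the proposition to substitute the closed form $r_t(n;a) = \frac{1}{D}\sum_{j=0}^{D-1} \xi_D^{tj} \lambda_j^n$ into the expression above, and interchange the order of summation over $t$ and $j$ to obtain the stated formula.

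There is essentially no obstacle here: the only thing one must be careful about is the hypothesis on $\xi$. The statement assumes $\xi^D = 1$ but not that $\xi$ is a primitive $D$-th root of unity; periodicity of $F$ with period $D$ is what ensures $\xi^{F(l)}$ is $D$-periodic in $l$, and this is the only property used in the reduction. So the argument is essentially a bookkeeping step on top of Proposition \ref{closedformprop}, and the main conceptual work has already been done in proving that proposition.
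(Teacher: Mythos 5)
Your argument is correct and is essentially identical to the paper's own proof: both split the sum by residue class modulo $D$, use the $D$-periodicity of $F$ to pull $\xi^{F(t)}$ out of each class, recognize the inner sums as $r_t(n;a)$, and then substitute the closed form from Proposition \ref{closedformprop}. No differences worth noting.
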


\begin{proof}
 Observe that
 \begin{eqnarray}
  \sum_{l=0}^n \binom{n}{l}a^l\xi^{F(l)}&=& \sum_{t=0}^{D-1}\left(\sum_{j\equiv t \text{ mod }D}\xi^{F(t)}a^l\binom{n}{j}\right)\\\nonumber
  &=& \sum_{t=0}^{D-1}\xi^{F(t)}r_t(n;a).
 \end{eqnarray}
The result now follows from Proposition \ref{closedformprop}.
\end{proof}

\begin{corollary}
\label{coroclosed}
 Let $F$ be a periodic function with period $D$.  Suppose that $\xi^D=1$ (not necessarily primitive).  Then,
 \begin{equation}
  \sum_{l=0}^n\binom{n}{l}\xi^{F(l)} = \frac{1}{D}\sum_{t=0}^{D-1}\xi^{F(t)}\sum_{j=0}^{D-1} \xi_D^{tj}\left(1+\xi_D^{-j}\right)^n,
 \end{equation}
 where  $\xi_D=\exp(2\pi i/D)$.
\end{corollary}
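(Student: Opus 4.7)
The plan is to derive this identity as the immediate specialization of Corollary \ref{corocloseda} to $a = 1$. Setting $a = 1$ on the left-hand side collapses the factor $a^l$ to $1$, while on the right-hand side the eigenvalues $\lambda_j = 1 + a\xi_D^{-j}$ become $1 + \xi_D^{-j}$, which is precisely the expression displayed in the statement. So the proof is essentially a one-line substitution.

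If a stand-alone derivation is preferred, its structure would mirror the short argument used for Corollary \ref{corocloseda}. First, split the sum according to the residue of $l$ modulo $D$,
\begin{equation*}
\sum_{l=0}^n \binom{n}{l} \xi^{F(l)} = \sum_{t=0}^{D-1} \sum_{\substack{0 \leq l \leq n \\ l \equiv t \pmod D}} \binom{n}{l} \xi^{F(l)}.
\end{equation*}
Next, use that $F$ has period $D$ and that $\xi^D = 1$ to replace $\xi^{F(l)}$ by $\xi^{F(t)}$ in the inner sum and pull it out as a constant, obtaining $\sum_{t=0}^{D-1} \xi^{F(t)} r_t(n; 1)$ with $r_t(n;1)$ as in Proposition \ref{closedformprop}. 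Finally, invoke Proposition \ref{closedformprop} with $a = 1$ to expand $r_t(n;1) = \frac{1}{D} \sum_{j=0}^{D-1} \xi_D^{tj}(1 + \xi_D^{-j})^n$ and substitute.

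There is essentially no obstacle here: the main analytical content, namely the spectral decomposition of the circulant $\CIRC(1, 0, \ldots, 0, 1)$ together with the splitting of the binomial expansion by residue classes, has already been packaged into Proposition \ref{closedformprop} and Corollary \ref{corocloseda}. The only point worth flagging is that the hypothesis $\xi^D = 1$ without primitivity is exactly what is needed to justify the periodicity substitution $\xi^{F(l)} = \xi^{F(t)}$ for $l \equiv t \pmod D$, so the corollary holds in this slightly stronger form. This weaker hypothesis on $\xi$ is precisely what will make the corollary applicable to the trace exponentials appearing in Theorem \ref{expsumformthm}, where the relevant root of unity $e^{2\pi i/p}$ satisfies $\xi^D = 1$ for any $D$ divisible by $p$ but need not be primitive of that order.
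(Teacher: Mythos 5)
Your proposal is correct and matches the paper exactly: the paper's proof of this corollary is precisely the one-line substitution $a=1$ in Corollary \ref{corocloseda}. The optional stand-alone derivation you sketch also mirrors the paper's proof of that earlier corollary, so there is nothing to add.
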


\begin{proof}
 Set $a=1$ in the previous corollary.
\end{proof}

These results can be extended further to obtain closed formulas for multinomial sums.

\begin{theorem}
\label{generalclosed}
Let $F(q_1,\cdots,q_r)$ be a periodic function in each component.  Moreover, suppose that $D$ is a period for $F$ in each component and that $\xi^D=1$ (not necessarily primitive).  Define,
\begin{equation}
\label{gensum}
S(n)=\sum_{q_1=0}^n\sum_{q_2=0}^{n-q_1}\cdots\sum_{q_r=0}^{n-q_1-\cdots-q_{r-1}} \binom{n}{q_1}\binom{n-q_1}{q_2}\cdots\binom{n-q_1-\cdots -q_{r-1}}{q_r}\xi^{F(q_1,\cdots,q_r)}. 
\end{equation}
Then,
\begin{equation}
\label{nicerep}
 S(n)=\frac{1}{D^r}\sum_{b_r=0}^{D-1}\sum_{b_{r-1}=0}^{D-1}\cdots \sum_{b_1=0}^{D-1}\sum_{j_1=0}^{D-1}\sum_{j_2=0}^{D-1}\cdots \sum_{j_r=0}^{D-1} \xi^{F(b_1,\cdots,b_r)}\xi_D^{j_1b_r+\cdots+j_rb_1}
 \lambda_{j_1,\cdots,j_r}^n,
\end{equation}
where  $\xi_D=\exp(2\pi i/D)$ and $\lambda_{j_1,\cdots, j_r} = 1+\xi_D^{-j_1}+\xi_D^{-j_2}+\cdots+\xi_D^{-j_r}$.
\end{theorem}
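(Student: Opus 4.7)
My approach is Fourier inversion on the group $(\mathbb{Z}/D\mathbb{Z})^r$ followed by the multinomial theorem. This gives a short and clean derivation in the spirit of the $r=1$ case already treated in Corollary~\ref{coroclosed}, but avoids the bookkeeping that iterating the one-variable result would demand.

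First, using the indicator identity $\mathbf{1}_{q\equiv b\pmod D}=\tfrac{1}{D}\sum_{j=0}^{D-1}\xi_D^{j(q-b)}$ componentwise, together with the hypothesis that $D$ is a period of $F$ in each variable, I would expand
\begin{equation*}
\xi^{F(q_1,\ldots,q_r)} = \frac{1}{D^r}\sum_{b_1,\ldots,b_r=0}^{D-1}\sum_{j_1,\ldots,j_r=0}^{D-1}\xi^{F(b_1,\ldots,b_r)}\,\xi_D^{-(j_1b_1+\cdots+j_rb_r)}\,\xi_D^{j_1q_1+\cdots+j_rq_r}.
\end{equation*}
Substituting this into (\ref{gensum}) and exchanging the $(\mathbf{b},\mathbf{j})$ sums with the $(q_1,\ldots,q_r)$ sum, what remains inside is
\begin{equation*}
M(j_1,\ldots,j_r)\;=\;\sum_{q_1=0}^n\binom{n}{q_1}\sum_{q_2=0}^{n-q_1}\binom{n-q_1}{q_2}\cdots\sum_{q_r=0}^{n-q_1-\cdots-q_{r-1}}\binom{n-q_1-\cdots-q_{r-1}}{q_r}\prod_{i=1}^r\xi_D^{j_iq_i}.
\end{equation*}
Setting $q_0=n-q_1-\cdots-q_r$, the nested product of binomial coefficients collapses into the single multinomial coefficient $\binom{n}{q_0,q_1,\ldots,q_r}$, and the multinomial theorem applied with $x_0=1$, $x_i=\xi_D^{j_i}$ gives $M(j_1,\ldots,j_r)=(1+\xi_D^{j_1}+\cdots+\xi_D^{j_r})^n$. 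Reindexing $j_i\mapsto D-j_i$ converts this to $\lambda_{j_1,\ldots,j_r}^n$ and simultaneously flips the sign in the exponent of $\xi_D$ that pairs $j$ with $b$, producing the formula (\ref{nicerep}) up to the cosmetic reversal $b_i\leftrightarrow b_{r+1-i}$ that matches the precise pairing $\xi_D^{j_1b_r+\cdots+j_rb_1}$ used in the statement.

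The main obstacle is purely notational: one must track two simultaneous reindexings (the $j\mapsto -j$ that produces $\xi_D^{-j_i}$ inside $\lambda$, and the reversal of the $b$-indices in the exponent) and verify they are consistent. No new inequalities or deep identities are required; the only nontrivial inputs are the identification of the nested product of binomials with a multinomial coefficient and the componentwise Fourier expansion, both of which hinge only on the stated periodicity. A parallel route, closer in style to Proposition~\ref{closedformprop}, would be to induct on $r$, peeling off the outermost sum over $q_1$, applying the hypothesis to the remaining $(r-1)$-variable sum (which yields $\mu^{n-q_1}$ with $\mu=1+\xi_D^{-j_2}+\cdots+\xi_D^{-j_r}$), and then invoking a direct generalization of Corollary~\ref{corocloseda} to the weighted sum $\sum_{q_1}\binom{n}{q_1}\mu^{n-q_1}\xi^{F(q_1,\cdot)}$, whose eigenvalue is precisely $(\mu+\xi_D^{-j_1})^n=\lambda_{j_1,\ldots,j_r}^n$. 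Both routes land on the same answer; the Fourier route seems the more transparent.
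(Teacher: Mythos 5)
Your proposal is correct, but it takes a genuinely different route from the paper. The paper proves the theorem by iterating its one-variable results: it peels off the innermost sum over $q_r$, applies Corollary~\ref{coroclosed}, then repeatedly applies Corollary~\ref{corocloseda} with the weight $a=\lambda_{j_1}^{-1}$ (and later $\lambda_{j_1,j_2}^{-1}$, etc.), using the telescoping identity $\lambda_{j_1}^{m}\bigl(1+\lambda_{j_1}^{-1}\xi_D^{-j_2}\bigr)^{m}=\lambda_{j_1,j_2}^{m}$ to rebuild the eigenvalues one character at a time; this is essentially the inductive alternative you sketch at the end. Your primary route instead does a single global Fourier expansion of $\xi^{F(q_1,\ldots,q_r)}$ over $(\mathbb{Z}/D\mathbb{Z})^r$ and then evaluates the remaining character sum in one stroke via the multinomial theorem. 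This buys two things: it is shorter and more symmetric, and it never introduces the inverses $\lambda_{j_1,\ldots,j_t}^{-1}$, which in the paper's iteration are undefined in degenerate cases (e.g.\ $D$ even and $j_1=D/2$ gives $\lambda_{j_1}=0$) and would there require a separate limiting or direct argument. One small correction to your bookkeeping: to match the pairing $\xi_D^{j_1b_r+\cdots+j_rb_1}$ in (\ref{nicerep}) you should reverse the $j$-indices, not the $b$-indices --- reversing the $b$'s would turn $\xi^{F(b_1,\ldots,b_r)}$ into $\xi^{F(b_r,\ldots,b_1)}$, whereas reversing the $j$'s is harmless because $\lambda_{j_1,\ldots,j_r}$ is symmetric in its subscripts. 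With that adjustment the two reindexings you flag are indeed consistent and the derivation closes.
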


\begin{proof}
 We present the proof for the case when $r=3$.  We decided to do this in order to simplify the writing of the proof.  The general case is the same argument repeated multiple times.
 
 Write $S(n)$ as
 \begin{equation}
  S(n)=\sum_{q_1=0}^n\sum_{q_2=0}^{n-q_1} \binom{n}{q_1}\binom{n-q_1}{q_2}\sum_{q_3=0}^{n-q_1-q_2}\binom{n-q_{1}-q_2}{q_3}\xi^{F(q_1,q_2,q_3)}.
 \end{equation}
Apply Corollary \ref{coroclosed} to the last sum to get
\begin{equation}
  S(n)=\sum_{q_1=0}^n\sum_{q_2=0}^{n-q_1} \binom{n}{q_1}\binom{n-q_1}{q_2}\left(\frac{1}{D}\sum_{b_3=0}^{D-1}\xi^{F(q_1,q_2,b_3)}
  \sum_{j_1=0}^{D-1}\xi_D^{j_1b_3}\lambda_{j_1}^{n-q_1-q_2}\right),
\end{equation}
where $\lambda_{j_1} = 1+\xi_D^{-j_1}$.  Re-write this equation as
\begin{equation}
\label{after1step}
 S(n)=\frac{1}{D}\sum_{b_3=0}^{D-1}\sum_{j_1=0}^{D-1}\xi_D^{j_1b_3}\sum_{q_1=0}^n\binom{n}{q_1}\lambda_{j_1}^{n-q_1}\sum_{q_2=0}^{n-q_1} \binom{n-q_1}{q_2}(\lambda_{j_1}^{-1})^{q_2}\xi^{F(q_1,q_{2},b_3)}.
\end{equation}
Now apply Corollary \ref{corocloseda} to the last sum to get
\begin{equation}
\label{axuliary1}
 S(n)=\frac{1}{D}\sum_{b_3=0}^{D-1}\sum_{j_1=0}^{D-1}\xi_D^{j_1b_3}\sum_{q_1=0}^n\binom{n}{q_1}\lambda_{j_1}^{n-q_1}\left(\frac{1}{D}\sum_{b_2=0}^{D-1}\xi^{F(q_1,b_2,b_3)}\sum_{j_2=0}^{D-1}\xi_D^{j_2b_2}\right)
 (1+\lambda_{j_1}^{-1}\xi_D^{-j_2})^{n-q_1}.
 \end{equation}
However, observe that 
$$\lambda_{j_1}^{n-q_1}(1+\lambda_{j_1}^{-1}\xi_D^{-j_2})^{n-q_1}=(\lambda_{j_1}+\xi_D^{-j_2})^{n-q_1}=(1+\xi_D^{-j_1}+\xi_D^{-j_2})^{n-q_1}=\lambda_{j_1,j_2}^{n-q_1}.$$
Therefore,
\begin{equation}
\label{axuliary2}
 S(n)=\frac{1}{D}\sum_{b_3=0}^{D-1}\sum_{j_1=0}^{D-1}\xi_D^{j_1b_3}\sum_{q_1=0}^n\binom{n}{q_1}\left(\frac{1}{D}\sum_{b_2=0}^{D-1}\xi^{F(q_1,b_2,b_3)}\sum_{j_2=0}^{D-1}\xi_D^{j_2b_2}\right)
 \lambda_{j_1,j_2}^{n-q_1}.
 \end{equation}
Rearrange terms to get
\begin{align}
 S(n)=\frac{1}{D^2}\sum_{b_3=0}^{D-1}\sum_{b_2=0}^{D-1}\sum_{j_1=0}^{D-1}\sum_{j_2=0}^{D-1}\xi_D^{j_1b_3+j_2b_2}\lambda_{j_1,j_2}^{n}.\sum_{q_1=0}^n\binom{n}{q_1}\xi^{F(q_1,b_2,b_3)}\xi_D^{j_2b_2}(\lambda_{j_1,j_2}^{-1})^{q_1}.
\end{align}
Apply Corollary \ref{corocloseda} once again. After simplification, we have
\begin{align}
 S(n)=\frac{1}{D^3}\sum_{b_3=0}^{D-1}\sum_{b_2=0}^{D-1}\sum_{b_1=0}^{D-1}\sum_{j_1=0}^{D-1}\sum_{j_2=0}^{D-1}\sum_{j_3=0}^{D-1}\xi_D^{j_1b_3+j_2b_2+j_3b_1}\xi^{F(b_1,b_2,b_3)}\lambda_{j_1,j_2,j_3}^n.
\end{align}
The general case follows using the same method.  This concludes the proof.
\end{proof}

Observe that equation (\ref{nicerep}) can be written as 
\begin{equation}
 S(n)=\sum_{j_1=0}^{D-1}\sum_{j_2=0}^{D-1}\cdots \sum_{j_r=0}^{D-1}
 d_{j_1,\cdots, j_r}(D)\lambda_{j_1,\cdots,j_r}^n,
\end{equation}
where
\begin{equation}
 d_{j_1,\cdots, j_r}(D)=\frac{1}{D^r}\sum_{b_r=0}^{D-1}\sum_{b_{r-1}=0}^{D-1}\cdots \sum_{b_1=0}^{D-1}  \xi^{F(b_1,\cdots,b_r)}\xi_D^{j_1b_r+\cdots+j_rb_1}.
\end{equation}
However, note that $\lambda_{t_1,\cdots, t_r}=\lambda_{t'_1,\cdots, t'_r}$ where $(t'_1,\cdots, t'_r)$ is any rearrangement of $(t_1,\cdots, t_r)$.  This means that the coefficient of 
$\lambda^n_{t_1,\cdots,t_r}$ in (\ref{nicerep}) is the sum of all $d_{t_1',\cdots,t_r'}(D)$ where $(t_1',\cdots,t_r')$ is a rearrangement of $(t_1,\cdots,t_r)$.
Recall that $\Sym(t_1,\cdots,t_r)$ represents the set 
of all rearrangements of $(t_1,\cdots, t_r)$.  Theorem \ref{generalclosed} now can be re-stated as follows.

\begin{theorem}
\label{generalclosedA}
Let $F(q_1,\cdots,q_r)$ be a periodic function in each component.  Moreover, suppose that $D$ is a period for $F$ in each component and that $\xi^D=1$ (not necessarily primitive).  Define,
\begin{equation}
\label{gensumA}
S(n)=\sum_{q_1=0}^n\sum_{q_2=0}^{n-q_1}\cdots\sum_{q_r=0}^{n-q_1-\cdots-q_{r-1}} \binom{n}{q_1}\binom{n-q_1}{q_2}\cdots\binom{n-q_1-\cdots -q_{r-1}}{q_r}\xi^{F(q_1,\cdots,q_r)}. 
\end{equation}
Then,
\begin{equation}
 S(n)=\sum_{j_1=0}^{D-1}\sum_{j_2=0}^{j_1}\cdots \sum_{j_r=0}^{j_{r-1}}c_{j_1,\cdots,j_r}(D)\left(1+\xi_D^{-j_1}+\cdots+\xi_D^{-j_r}\right)^n,
\end{equation}
where  
\begin{equation}
 c_{j_1,\cdots, j_r}(D)=\frac{1}{D^r}\sum_{b_r=0}^{D-1}\sum_{b_{r-1}=0}^{D-1}\cdots \sum_{b_1=0}^{D-1}  \xi^{F(b_1,\cdots,b_r)}\sum_{(j_1',\cdots,j_r')\in \Sym(j_1,\cdots, j_r)}
 \xi_D^{j_1'b_r+\cdots+j_r' b_1},
\end{equation}
and $\xi_D=\exp(2\pi i/D)$.
\end{theorem}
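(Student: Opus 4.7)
The plan is to deduce Theorem \ref{generalclosedA} from Theorem \ref{generalclosed} by a regrouping argument built on the symmetry of the eigenvalues. From the discussion immediately preceding the statement, equation (\ref{nicerep}) may be rewritten as
\[
 S(n)=\sum_{j_1=0}^{D-1}\sum_{j_2=0}^{D-1}\cdots\sum_{j_r=0}^{D-1} d_{j_1,\ldots,j_r}(D)\,\lambda_{j_1,\ldots,j_r}^n,
\]
with
\[
 d_{j_1,\ldots,j_r}(D)=\frac{1}{D^r}\sum_{b_r=0}^{D-1}\cdots\sum_{b_1=0}^{D-1}\xi^{F(b_1,\ldots,b_r)}\,\xi_D^{j_1 b_r+\cdots+j_r b_1}.
\]
The content of the theorem is then purely organizational: the $r$ free indices must be collapsed down to a single weakly decreasing $r$-tuple.

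The key observation is that $\lambda_{j_1,\ldots,j_r}=1+\xi_D^{-j_1}+\cdots+\xi_D^{-j_r}$ depends only on the multiset $\{j_1,\ldots,j_r\}$, so it is invariant under the natural action of the symmetric group on $\{0,\ldots,D-1\}^r$. I would partition the index cube into orbits of this action; each orbit has a unique weakly decreasing representative $(j_1,\ldots,j_r)$ with $D-1\geq j_1\geq j_2\geq\cdots\geq j_r\geq 0$, and these representatives are indexed precisely by the nested summation $\sum_{j_1=0}^{D-1}\sum_{j_2=0}^{j_1}\cdots\sum_{j_r=0}^{j_{r-1}}$ appearing in the theorem. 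Pulling the common value $\lambda_{j_1,\ldots,j_r}^n$ out of each orbit yields
\[
 S(n)=\sum_{j_1=0}^{D-1}\sum_{j_2=0}^{j_1}\cdots\sum_{j_r=0}^{j_{r-1}}\lambda_{j_1,\ldots,j_r}^n\sum_{(j_1',\ldots,j_r')\in\Sym(j_1,\ldots,j_r)} d_{j_1',\ldots,j_r'}(D).
\]
After substituting the explicit formula for $d$ and interchanging the order of summation between $(b_1,\ldots,b_r)$ and the rearrangements, the inner coefficient sum matches the definition of $c_{j_1,\ldots,j_r}(D)$ given in the statement, which completes the derivation.

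The argument is essentially bookkeeping and I do not anticipate any serious obstacle. The one spot where care is needed is the interpretation of $\Sym(j_1,\ldots,j_r)$: following the convention set in Section \ref{multinomialsums}, this must be read as the set of \emph{distinct} rearrangements of the tuple, so that its cardinality equals the size of the orbit of $(j_1,\ldots,j_r)$ under the symmetric group action. With this convention no orbit element is counted more than once even when some of the $j_i$ coincide, and the partition of the index cube used above is exact.
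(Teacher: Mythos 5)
Your proposal is correct and follows exactly the route the paper takes: the paper's own ``proof'' is the one-line remark that the theorem is a re-statement of Theorem \ref{generalclosed}, with the substance carried by the preceding discussion, which performs precisely your regrouping of the full index cube into orbits under rearrangement, exploiting the invariance of $\lambda_{j_1,\cdots,j_r}$ and summing the $d_{j_1',\cdots,j_r'}(D)$ over $\Sym(j_1,\cdots,j_r)$. Your added caution about reading $\Sym$ as the set of \emph{distinct} rearrangements is consistent with the convention the paper fixes in Section \ref{multinomialsums}, so nothing is missing.
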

\begin{proof}
 This is just a re-statement of Theorem \ref{generalclosed}.
\end{proof}

A nice consequence of this result is that sequences of the form $\{S(n)\}$, with $S(n)$ defined as in (\ref{gensumA}), satisfy linear recurrences with integer coefficients. Moreover, we can 
provide explicit characteristic polynomials for such recurrences.

\begin{corollary}
\label{gencoro}
 Let $S(n)$ be defined as in (\ref{gensumA}).  Then, the sequence $\{S(n)\}$ satisfies the linear recurrence with integer coefficients whose characteristic polynomial is given by
 \begin{equation}
  P_S(X)=\prod_{a_1=0}^{D-1}\,\prod_{0\leq a_2\leq a_1} \cdots \prod_{0\leq a_r\leq a_{r-1}} \left(X-(1+\xi_D^{a_1}+\cdots+\xi_D^{a_r})\right).
 \end{equation}
\end{corollary}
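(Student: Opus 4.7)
The plan is to read off the result almost directly from Theorem \ref{generalclosedA}, with the only real work being to verify that the polynomial $P_S(X)$ actually has \emph{integer} (not merely complex) coefficients, and that its root set matches the one produced by Theorem \ref{generalclosedA}.

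First I would apply Theorem \ref{generalclosedA} to write
\[
 S(n)=\sum_{j_1=0}^{D-1}\sum_{j_2=0}^{j_1}\cdots\sum_{j_r=0}^{j_{r-1}} c_{j_1,\ldots,j_r}(D)\,\mu_{j_1,\ldots,j_r}^{\,n},
\]
where $\mu_{j_1,\ldots,j_r}=1+\xi_D^{-j_1}+\cdots+\xi_D^{-j_r}$. The substitution $a_i=-j_i\bmod D$ shows that the multiset
\[
\{\mu_{j_1,\ldots,j_r} : 0\le j_r\le\cdots\le j_1\le D-1\}
\]
coincides, as a multiset, with
\[
\{1+\xi_D^{a_1}+\cdots+\xi_D^{a_r} : 0\le a_r\le\cdots\le a_1\le D-1\},
\]
which is exactly the multiset of roots of $P_S(X)$. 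Hence $S(n)$ is a $\mathbb{C}$-linear combination of $n$-th powers of roots of $P_S$, so by the standard theory of linear recurrences (writing $P_S(X)=X^d-p_{d-1}X^{d-1}-\cdots-p_0$ and using $\mu^d=p_{d-1}\mu^{d-1}+\cdots+p_0$ for each root $\mu$) the sequence $\{S(n)\}$ satisfies the linear recurrence with characteristic polynomial $P_S(X)$.

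The step I expect to require the most care is showing that $P_S(X)\in\mathbb{Z}[X]$. Each root $1+\xi_D^{a_1}+\cdots+\xi_D^{a_r}$ is an algebraic integer (a sum of roots of unity), so the coefficients of $P_S$ are at least algebraic integers. To show they are rational, I would invoke Galois invariance: the Galois group $\mathrm{Gal}(\mathbb{Q}(\xi_D)/\mathbb{Q})$ acts via $\sigma_k:\xi_D\mapsto\xi_D^k$ for $\gcd(k,D)=1$, sending the tuple $(a_1,\ldots,a_r)$ to $(ka_1\bmod D,\ldots,ka_r\bmod D)$. Reordering, this sends the multiset $\{a_1,\ldots,a_r\}\subseteq\{0,1,\ldots,D-1\}$ to another such multiset, and the indexing set of $P_S$ is precisely the set of all such multisets (each represented once by its decreasing listing). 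Therefore the root multiset of $P_S$ is stable under $\sigma_k$, so the coefficients of $P_S$ are Galois-fixed algebraic integers in $\mathbb{Q}(\xi_D)$, hence lie in $\mathbb{Z}$.

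Combining these two points, the recurrence defined by $P_S(X)$ has integer coefficients and is annihilated by the $n$-th powers appearing in the closed form of $S(n)$ from Theorem \ref{generalclosedA}. Thus $\{S(n)\}$ satisfies this integer linear recurrence, which is the statement of the corollary.
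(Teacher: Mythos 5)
Your proof is correct and takes essentially the same route as the paper, whose entire proof is the single line that the corollary is a direct consequence of Theorem \ref{generalclosedA}. You go further than the paper by explicitly verifying the two points it leaves implicit --- that negation modulo $D$ identifies the bases $1+\xi_D^{-j_1}+\cdots+\xi_D^{-j_r}$ from the closed form with the roots of $P_S(X)$, and that stability of the root multiset under the Galois action $\xi_D\mapsto\xi_D^{k}$ forces $P_S(X)\in\mathbb{Z}[X]$ --- and both verifications are sound.
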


\begin{proof}
This is a direct consequence of the above theorem. 
\end{proof}

The linear recurrence given in Corollary \ref{gencoro} is not necessarily the minimal linear recurrence with integer coefficients satisfied by $\{S(n)\}$.  However, the characteristic polynomial of the 
minimal of such recurrences must be a factor of $P_S(X)$.

\begin{example}
 Let $F$ be a $n$-variable Boolean function.  The {\it nega-Hadamard transform of F} is defined as the complex valued function given by
\begin{equation}
\label{nega1}
 \mathcal{N}_{F}(\boldsymbol{a}) = \sum_{{\bf x} \in \mathbb{F}_2^n} (-1)^{F({\bf x})\oplus\boldsymbol{a}\cdot {\bf x}} \,\,i^{w({\bf x})},
\end{equation}
where $i=\sqrt{-1}$ and $w({\bf x})$ is the Hamming weight of the vector ${\bf x}$.  According to Riera and Parker \cite{rp}, the nega-Hadamard transform is central
to the structural analysis of pure $n$-qubit stabilizer quantum states.

Consider the case $\boldsymbol{a}=\boldsymbol{0}$, which is the equivalent of the exponential sum in this setting.  If $F$ is symmetric, then $\mathcal{N}_{F}(\boldsymbol{0})$ can be written as a 
binomial sum.  In particular,
\begin{equation}
 \mathcal{N}_{\boldsymbol{e}_{n,[k_1,\cdots,k_s]}}(\boldsymbol{0})=\sum_{q=0}^n \binom{n}{q} i^{q} (-1)^{\binom{q}{k_1}+\cdots+\binom{q}{k_s}}.
\end{equation}
Let $r=\lfloor\log_2(k_s)\rfloor+1$ and $D=2^{r}$.  Lucas' Theorem and Corollary \ref{corocloseda} imply that
\begin{equation}
\mathcal{N}_{\boldsymbol{e}_{n,[k_1,\cdots,k_s]}}(\boldsymbol{0}) = \sum_{j=0}^{D-1} \left(\frac{1}{D}\sum_{t=0}^{D-1}(-1)^{\binom{t}{k_1}+\cdots+\binom{t}{k_s}}\xi_D^{tj}\right)\lambda_j^n,
\end{equation}
where $\lambda_j = 1+i \xi_D^{-j}$.  Moreover, Corollary \ref{gencoro} implies that the sequence $\{\mathcal{N}_{\boldsymbol{e}_{n,[k_1,\cdots,k_s]}}(\boldsymbol{0})\}$ satisfies the linear
recurrence with integer coefficients given by
\begin{eqnarray}
 P(X)&=& \prod_{a=0}^{2^r-1} \left(X-(1+i \xi_D^a)\right)\\\nonumber
 &=& (X-2)\Phi_{4}(X-1)\Phi_8(X-1)\cdots \Phi_{2^r}(X-1),
\end{eqnarray}
where $\Phi_n(X)$ is the $n$-th cyclotomic polynomial.

We would like to point out that this is not a new result. It was already established in \cite{cms}.  However, we decided to include it because it is a straightforward application of our results.
 \end{example}

\begin{example}
 Consider the sum
 \begin{equation}
  S(n)=\sum _{q_1=0}^n \sum _{q_2=0}^{n-q_1}\sum _{q_3=0}^{n-q_1-q_2} \binom{n}{q_1}\binom{n-q_1}{q_2} \binom{n-q_1-q_2}{q_3}\xi_5^{q_1+q_2+q_3},
 \end{equation}
where $\xi_5=\exp(2\pi i/5)$.  Let $F(q_1,q_2,q_3)=q_1+q_2+q_3$.  Note that $F(q_1,q_2,q_3)\mod 5$ is clearly periodic in each component with period 5.  Therefore, Corollary \ref{gencoro} implies that
$\{S(n)\}$ satisfies the linear recurrence whose characteristic polynomial is given by
\begin{align}
 P_S(X)=&\prod _{a_1=0}^4 \prod _{a_2=0}^{a_1} \prod _{a_3=0}^{a_2}\left(X-\left(1+\xi_5^{a_1}+\xi_5^{a_2}+\xi_5^{a_3}\right)\right).
\end{align}
However, the minimal linear recurrence with integer coefficients satisfied by $\{S(n)\}$ has characteristic polynomial
\begin{eqnarray}
 \mu_S(X)&=&X^5-5 X^4+10 X^3-10 X^2+5 X-244\\\nonumber
 &=&(X-4) \left(X^4-X^3+6 X^2+14 X+61\right).
\end{eqnarray}
Thus, it must be true that $\mu_S(X)|P_S(X)$. Indeed, after simplification, we have
\begin{align}
 P_S(X)=& (X-4) \left(X^2-3 X+1\right) \left(X^4-11 X^3+46 X^2-86 X+61\right)\\\nonumber
 &\left(X^4-6 X^3+16 X^2-21 X+11\right) \left(X^4-6 X^3+16 X^2-16 X+16\right)\\\nonumber
 &\left(X^4-X^3-4 X^2+4 X+11\right) \left(X^4-X^3+X^2-X+1\right)\\\nonumber
 &\left(X^4-X^3+6 X^2-6 X+11\right) \left(X^4-X^3+6 X^2+4 X+1\right)\\\nonumber
 &\left(X^4-X^3+6 X^2+14
   X+61\right).
\end{align}
The fact that $\mu_S(X)|P_S(X)$ is now evident.
\end{example}

\begin{example}
Other toy examples can be constructed with previous classical results.  For example, it is known that $\{f_n \mod m\}$, where $f_n$ represents the $n$-th Fibonacci number and $m$ is a positive integer, is 
periodic.  The period is known as the Pisano period mod $m$ and it is usually denoted by $\pi(m)$.  Let $f_n^{(m)}$ represent $f_n\mod m$ and consider the sum
\begin{equation}
 S_m(n)=\sum_{q=0}^n \binom{n}{q}\xi_{\pi(m)}^{f_q^{(m)}},
\end{equation}
where $\xi_{\pi(m)}=\exp(2\pi i/\pi(m))$.  Corollary \ref{gencoro} implies that $\{S_m(n)\}$ satisfies the linear recurrence with integer coefficients whose characteristic polynomial is given by
\begin{equation}
 P_{S_m}(X) = \prod_{a=0}^{\pi(m)-1} (X-(1+\xi_{\pi(m)}^a)).
\end{equation}
Moreover, Corollary \ref{corocloseda} implies that its closed form is given by
\begin{equation}
 S_m(n)=\sum_{j=0}^{\pi(m)-1} \left(\frac{1}{\pi(m)}\sum_{t=0}^{\pi(m)-1}\xi_{\pi(m)}^{f_t^{(m)}+tj}\right) \left(1+\xi_{\pi(m)}^{-j}\right)^n.
\end{equation}
This example can be easily generalized to any Lucas sequence of the first kind $u_n(a,b)$ (the Fibonacci sequence is given by $u_n(1,-1)$).
\end{example}

Let us go back to our exponential sums.  The above results can be used to obtain closed formulas for exponential sums of elementary symmetric polynomials.  Let $\mathbb{F}_q=\{0,\alpha_1,\cdots,\alpha_{q-1}\}$.  Theorem \ref{expsumformthm} implies that
\begin{equation}
 S_{\mathbb{F}_q}(\boldsymbol{e}_{n,k}) = \sum_{m_1=0}^n\sum_{m_2=0}^{n-m_1}\cdots\sum_{m_{q-1}=0}^{n-m_1-\cdots-m_{q-1}}{n\choose m_0^*,m_1,m_2,\cdots, m_{q-1}}\xi_p^{\text{Tr}
 \left(\Lambda_{\alpha_1,\cdots,\alpha_{q-1}}\left(k,m_1,\cdots,m_{q-1}\right)\right)}
\end{equation}
where $m_0^*=n-(m_1+\cdots+m_{q-1})$, $\xi_p=\exp(2\pi i/p)$ and $\text{Tr}=\text{Tr}_{\mathbb{F}_q/\mathbb{F}_p}$.  Moreover, note that
$${n\choose m_0^*,m_1,m_2,\cdots, m_{q-1}}=\binom{n}{m_1}\binom{n-m_1}{m_2}\cdots\binom{n-m_1-\cdots-m_{q-2}}{m_{q-1}}.$$
Therefore, if we let 
\begin{equation}
F_{k;\mathbb{F}_q}(m_1,\cdots,m_{q-1})=\Lambda_{\alpha_1,\cdots,\alpha_{q-1}}(k,m_1,\cdots, m_{q-1}),
\end{equation}
then
\begin{equation}
\label{goodtype}
S_{\mathbb{F}_q}(\boldsymbol{e}_{n,k}) = \sum_{m_1=0}^n\sum_{m_2=0}^{n-m_1}\cdots\sum_{m_{q-1}=0}^{n-m_1-\cdots-m_{q-1}}{n\choose m_0^*,m_1,m_2,\cdots, m_{q-1}}\xi_p^{\text{Tr}
\left(F_{k,\mathbb{F}_q}\left(m_1,\cdots,m_{q-1}\right)\right)}
\end{equation}
is of the same type as (\ref{gensum}).  It remains to show the periodicity of $F_{k;\mathbb{F}_q}$.

We start with the following lemma.

\begin{lemma}
\label{lambdaperiod}
 Let $p$ be prime and $a_1,\cdots, a_l$ be some elements in some field extension of $\mathbb{F}_p$.  Define
 \begin{equation}
  \Lambda^{(p)}_{a_1,\cdots,a_l}(k,m_1,\cdots,m_l)= \Lambda_{a_1,\cdots,a_l}(k,m_1^{+},\cdots,m_l^{+})\mod p,
\end{equation}
where 
$$m_j^{+} = \begin{cases}
 m_j, & \text{ if }m_j>0 \\
 m_j + \left(\left\lfloor\frac{-m_j}{D}\right\rfloor+1\right)D, & \text{ if }m_j\leq 0.
\end{cases}
$$
Then, $\Lambda^{(p)}_{a_1,\cdots, a_l}(k,m_1,\cdots,m_l)$ is periodic in each of the variables $m_1,\cdots, m_l$ with period $D=p^{\lfloor\log_p(k)r\rfloor+1}$.
\end{lemma}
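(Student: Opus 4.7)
The plan is to proceed by induction on $l$, with the central tool being the consequence of Lucas' Theorem stated just before the lemma, namely $\binom{n+D}{k} \equiv \binom{n}{k} \pmod{p}$ whenever $D = p^{\lfloor \log_p(k) \rfloor + 1}$.

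For the base case $l=1$, I have $\Lambda_{a_1}(k,m) = a_1^k\binom{m}{k}$, so Lucas' Theorem yields $\Lambda_{a_1}(k, m+D) \equiv \Lambda_{a_1}(k,m) \pmod{p}$ directly. The role of the adjustment $m \mapsto m^+$ is just to ensure that we evaluate at a non-negative representative in the residue class of $m$ modulo $D$, so the definition of $\Lambda^{(p)}$ makes sense and is $D$-periodic on all of $\mathbb{Z}$.

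For the inductive step, assume the statement for $l$ variables. Periodicity of $\Lambda^{(p)}_{a_1,\ldots,a_{l+1}}$ in each of $m_1,\ldots,m_l$ is inherited immediately from the induction hypothesis, since these variables appear in the recursion only inside terms $\Lambda_{a_1,\ldots,a_l}(k-j, m_1,\ldots,m_l)$, whose period in each such variable is $p^{\lfloor \log_p(k-j)\rfloor+1}$, dividing $D$. The real content is periodicity in $m_{l+1}$. The key observation is that $\Lambda_{a_1,\ldots,a_l}(k-j,\ldots) = 0$ for $j > k$ (by the standard convention $\binom{n}{s}=0$ for $s<0$, propagated through the recursion), so the defining sum truncates to
\[
\Lambda_{a_1,\ldots,a_{l+1}}(k, m_1,\ldots, m_{l+1}) = \sum_{j=0}^{\min(k,\, m_{l+1})} \binom{m_{l+1}}{j}\, a_{l+1}^j\, \Lambda_{a_1,\ldots,a_l}(k-j, m_1,\ldots, m_l).
\]
When $m_{l+1} \geq k$, the upper index is exactly $k$, and for each fixed $0 \leq j \leq k$ Lucas' Theorem gives $\binom{m_{l+1}+D}{j} \equiv \binom{m_{l+1}}{j} \pmod{p}$ (since $p^{\lfloor \log_p(j)\rfloor+1}$ divides $D$). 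Combined with the induction hypothesis applied to each $\Lambda_{a_1,\ldots,a_l}(k-j,\ldots)$, this shows the sum is unchanged mod $p$ under $m_{l+1} \mapsto m_{l+1}+D$.

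The main obstacle, and the reason for the $m_j^+$ construction in the statement, is the small-argument regime $m_{l+1} < k$: there the truncation bounds for $m_{l+1}$ and $m_{l+1}+D$ disagree, so the term-by-term comparison above fails. The fix encoded in $m_j^+$ is to always shift any non-positive argument up by a suitable multiple of $D$ into the stable region $m_{l+1} \geq k$, where the argument of the preceding paragraph applies verbatim; periodicity on all of $\mathbb{Z}$ then follows by definition. Once the shift is absorbed, the rest is a clean induction, and no further delicate analysis is required beyond carefully tracking which period $p^{\lfloor \log_p(k-j)\rfloor+1}$ appears at each recursive level and noting that it always divides the common period $D$.
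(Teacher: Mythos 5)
Your overall strategy (induction on $l$, Lucas' Theorem for the base case, and the observation that $\Lambda_{a_1,\ldots,a_l}(k-j,\cdot)=0$ for $j>k$ so that the recursion effectively truncates at $j\le k$) is exactly the paper's. But your treatment of the boundary regime is wrong in a way that leaves a real hole. You claim the term-by-term comparison fails when $0<m_{l+1}<k$ and that the $m_j^{+}$ construction repairs this by shifting the argument ``into the stable region $m_{l+1}\ge k$.'' Neither claim is correct. First, $m_j^{+}=m_j$ whenever $m_j>0$, so the construction does nothing at all in the range $0<m_{l+1}<k$; and even for non-positive $m_j$ the shifted value lands in $\{1,\ldots,D\}$, not necessarily in $[k,\infty)$. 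So, as written, your proof does not establish periodicity for $0<m_{l+1}<k$: you disclaim the direct argument there and route that case through a mechanism that is the identity map on it.

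Fortunately the case is not actually problematic, and the paper's proof covers it with no special treatment: comparing
\[
\sum_{j=0}^{m_{l+1}+D}\binom{m_{l+1}+D}{j}a_{l+1}^j\Lambda_{a_1,\ldots,a_l}(k-j,\ldots)
\quad\text{with}\quad
\sum_{j=0}^{m_{l+1}}\binom{m_{l+1}}{j}a_{l+1}^j\Lambda_{a_1,\ldots,a_l}(k-j,\ldots),
\]
the surviving terms of the first sum have $j\le k<D$, and for those Lucas gives $\binom{m_{l+1}+D}{j}\equiv\binom{m_{l+1}}{j}\pmod p$; when $m_{l+1}<j\le k$ this congruence forces the coefficient to vanish mod $p$, since $\binom{m_{l+1}}{j}=0$ for $j>m_{l+1}\ge 0$. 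Hence the two sums agree mod $p$ for every non-negative $m_{l+1}$, large or small. The genuine reason for the $m_j^{+}$ construction --- which the paper states explicitly --- is the negative range: by the recursive (empty-sum) definition, $\Lambda$ vanishes identically once an argument is negative, so $\Lambda$ is only forward-periodic on the non-negative integers, and one must replace each non-positive argument by a positive representative of its class mod $D$ to obtain a function that is honestly $D$-periodic on all of $\mathbb{Z}$. To repair your write-up, run the Lucas comparison for all $m_{l+1}\ge 0$ as above, and invoke $m_j^{+}$ only to extend periodicity to non-positive arguments.
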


\begin{proof}
We first show that if $m_1,\cdots, m_l$ are all non-negative, then $$\Lambda_{a_1,\cdots,a_l}(k,m_1,\cdots,m_j+D,\cdots, m_l)\equiv\Lambda_{a_1,\cdots,a_l}(k,m_1,\cdots,m_j,\cdots, m_l)\mod p$$ 
for each $j=1,\cdots, l$.  The proof of this claim is by induction on $l$.  

Suppose first that $l=1$.  That is, consider
 \begin{equation}
  \Lambda_{a_1}(k,m_1) = a^k\binom{m_1}{k}.
 \end{equation}
Lucas' Theorem implies that if $D=p^{\lfloor\log_p(k)\rfloor+1}$, then
\begin{equation}
 \binom{m_1+D}{k'}\equiv \binom{m_1}{k'}\mod p,
\end{equation}
for ever $k'\leq k$.
Therefore, $\Lambda_{a_1}(k',m_1+D)\equiv \Lambda_{a_1}(k',m_1)\mod p$ for every $k'\leq k$ and the result holds for $l=1$.  

Suppose now that the result holds for some $l\geq 1$.  Consider $\Lambda_{a_1,\cdots,a_l,a_{l+1}}(k,m_1,\cdots,m_l,m_{l+1})$.  Recall that
\begin{equation}
\label{lambdaind}
 \Lambda_{a_1,\cdots,a_l,a_{l+1}}(k,m_1,\cdots,m_l,m_{l+1}) = \sum_{j=0}^{m_{l+1}} \binom{m_{l+1}}{j}a_{l+1}^j \Lambda_{a_1,\cdots, a_l}(k-j,m_1,\cdots, m_l).
\end{equation}
It is clear that $$\Lambda_{a_1,\cdots,a_l,a_{l+1}}(k,m_1,\cdots,m_j+D,\cdots,m_l,m_{l+1})\equiv \Lambda_{a_1,\cdots,a_l,a_{l+1}}(k,m_1,\cdots,m_j,\cdots,m_l,m_{l+1})\mod p$$ holds for $j=1,\cdots, l$ (induction hypothesis).  It remains to show that it is also true for the variable $m_{l+1}$.  In order to do that, first note that a simple induction argument shows that if $k<0$, then $$\Lambda_{a_1,\cdots, a_l}(k,m_1,\cdots, m_l)=0.$$
Therefore, every term on the right-hand side of (\ref{lambdaind}) for which $j>k$ is 0.  This implies that the binomial coefficient that accompanies every surviving term in (\ref{lambdaind}) satisfies (Lucas' Theorem)
\begin{equation}
 \binom{m_{l+1}+D}{j}\equiv \binom{m_{l+1}}{j} \mod p.
\end{equation}
Then,
\begin{eqnarray}\nonumber
 \Lambda_{a_1,\cdots, a_l,a_{l+1}}(k,m_1,\cdots,m_l,m_{l+1}+D) &=& \sum_{j=0}^{m_{l+1}+D}\binom{m_{l+1}+D}{j}a_{l+1}^j \Lambda_{a_1,\cdots,a_{l}}(k-j,m_1,\cdots,m_l)\\
 &\equiv& \sum_{j=0}^{m_{l+1}+D}\binom{m_{l+1}}{j}a_{l+1}^j \Lambda_{a_1,\cdots,a_{l}}(k-j,m_1,\cdots,m_l)\mod p\\\nonumber
 &\equiv& \sum_{j=0}^{m_{l+1}}\binom{m_{l+1}}{j}a_{l+1}^j \Lambda_{a_1,\cdots,a_{l}}(k-j,m_1,\cdots,m_l)\mod p\\\nonumber
 &\equiv& \Lambda_{a_1,\cdots,a_{l+1}}(k,m_1,\cdots,m_{l+1}) \mod p.
\end{eqnarray}
Therefore, 
$$\Lambda_{a_1,\cdots, a_{l+1}}(k,m_1,\cdots, m_{l+1}+D)\equiv \Lambda_{a_1,\cdots, a_{l+1}}(k,m_1,\cdots, m_{l+1})\mod p$$ is also true.   We conclude by induction that if $m_1,\cdots,m_l$ are 
non-negative integers,
then
$$\Lambda_{a_1,\cdots, a_l}(k,m_1,\cdots,m_j+D,\cdots,m_l)\equiv \Lambda_{a_1,\cdots, a_l}(k,m_1,\cdots,m_j,\cdots,m_l)\mod p$$
for $j=1,\cdots, l$ and $D=p^{\lfloor\log_p(k)r\rfloor+1}$.

It is clear that 
\begin{equation}
\Lambda_{a_1,\cdots, a_l}(k,m_1,\cdots,m_j+t D,\cdots,m_l)\equiv \Lambda_{a_1,\cdots, a_l}(k,m_1,\cdots,m_j,\cdots,m_l)\mod p
\end{equation}
for every non-negative integer $t$.  Sadly, the same cannot be said about negative $t$.  For example, if $m_l$ is negative, then by the inductive definition of $\Lambda_{a_1,\cdots, a_l}$ one has that $\Lambda_{a_1,\cdots,a_l}(k,m_1,\cdots, m_l)=0$.  However, this can be circumvented by defining the function 
$$\Lambda^{(p)}_{a_1,\cdots,a_l}(k,m_1,\cdots,m_l):= \Lambda_{a_1,\cdots,a_l}(k,m_1^{+},\cdots,m_l^{+})\mod p,$$
where 
\begin{equation}
m_j^{+} = \begin{cases}
 m_j, & \text{ if }m_j>0 \\
 m_j + \left(\left\lfloor\frac{-m_j}{D}\right\rfloor+1\right)D, & \text{ if }m_j\leq 0.
\end{cases}
\end{equation}
Observe that $$\Lambda^{(p)}(k,m_1,\cdots,m_j+t D,\cdots,m_l)=\Lambda^{(p)}(k,m_1,\cdots,m_j,\cdots,m_l)$$ for every $t\in \mathbb{Z}$ and $j=1,\cdots,l$.  In other words, $\Lambda^{(p)}_{a_1,\cdots,a_l}(k,m_1,\cdots,m_l)$ is periodic in each of the variables $m_1,\cdots,m_l$ with period $D$.  This concludes the proof.
\end{proof}

Let us go back to formula (\ref{goodtype}) for $S_{\mathbb{F}_q}(\boldsymbol{e}_{n,k})$.  Note that the value of $\xi_p^{\Tr(F_{k;\mathbb{F}_q}(m_1,\cdots,m_{q-1}))}$ depends only on the value of $F_{k;\mathbb{F}_q}(m_1,\cdots,m_l)\mod p$.  Therefore, if we define
\begin{equation}
F^{(p)}_{k;\mathbb{F}_q}(m_1,\cdots,m_{q-1}):= \Lambda^{(p)}_{\alpha_1,\cdots,\alpha_{q-1}}(k,m_1,\cdots,m_{q-1}),
\end{equation}
then
\begin{equation}
\label{goodtype2}
S_{\mathbb{F}_q}(\boldsymbol{e}_{n,k}) = \sum_{m_1=0}^n\sum_{m_2=0}^{n-m_1}\cdots\sum_{m_{q-1}=0}^{n-m_1-\cdots-m_{q-1}}{n\choose m_0^*,m_1,m_2,\cdots, m_{q-1}}\xi_p^{\text{Tr}
\left(F^{(p)}_{k,\mathbb{F}_q}\left(m_1,\cdots,m_{q-1}\right)\right)}.
\end{equation}

We now present our closed formulas for $S_{\mathbb{F}_q}(\boldsymbol{e}_{n,k})$.  This generalizes Cai et al.'s result for the binary case \cite{cai}.  It also generalizes the recurrence exploited in \cite{cm1,cm2}.
\begin{theorem}
\label{closedformsSq}
 Let $n$ and $k>1$ be positive integers and $p$ be a prime and $q=p^r$ with $r\geq 1$.  Let $D=p^{\lfloor\log_p(k)\rfloor+1}$. Then,
 $$ S_{\mathbb{F}_q}(\boldsymbol{e}_{n,k}) = \sum_{j_1=0}^{D-1}\sum_{j_2=0}^{j_1}\cdots \sum_{j_{q-1}=0}^{j_{q-2}} 
 c_{j_1,\cdots,j_{q-1}}(k)\left(1+\xi_D^{-j_1}+\cdots+\xi_D^{-j_{q-1}}\right)^n,$$
 where 
 \begin{equation*}
  c_{j_1,\cdots,j_{q-1}}(k)=\frac{1}{D^{q-1}}\sum_{b_{q-1}=0}^{D-1}\cdots \sum_{b_1=0}^{D-1} \xi_p^{\Tr\left(F_{k;\mathbb{F}_q}^{(p)}\left(b_1,\cdots,b_{q-1}\right)\right)}
  \sum_{(j_1',\cdots,j_{q-1}')\in \Sym(j_1,\cdots, j_{q-1})}
 \xi_D^{j_1'b_{q-1}+\cdots+j_{q-1}' b_1},
 \end{equation*}
$\xi_{m}=\exp(2\pi i/m)$, $\Tr=\Tr_{\mathbb{F}_q/\mathbb{F}_p}$, and $\lambda_{j_1,\cdots, j_{q-1}} = 1+\xi_D^{-j_1}+\xi_{D}^{-j_2}+\cdots+\xi_D^{-j_{q-1}}$.
In particular, the sequence $\{S_{\mathbb{F}_q}(\boldsymbol{e}_{n,k})\}$ satisfies the linear recurrence with integer coefficients whose characteristic polynomial is given by
$$P_{q,k}(X)=\prod_{a_1=0}^{D-1}\,\prod_{0\leq a_2\leq a_1} \cdots \prod_{0\leq a_{q-1}\leq a_{q-2}} \left(X-\left(1+\xi_D^{a_1}+\cdots+\xi_D^{a_{q-1}}\right)\right).$$
\end{theorem}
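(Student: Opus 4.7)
The plan is to deduce Theorem \ref{closedformsSq} as a direct specialization of the general framework already in place, namely Theorem \ref{generalclosedA} and Corollary \ref{gencoro}. Almost all of the substantive work has been done in the preceding pages; what remains is to verify that the multinomial sum representation of $S_{\mathbb{F}_q}(\boldsymbol{e}_{n,k})$ fits the hypotheses of that framework, and then read off the conclusion.

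First, I would take as the starting point the identity (\ref{goodtype2}), which writes
\[
S_{\mathbb{F}_q}(\boldsymbol{e}_{n,k}) = \sum_{m_1=0}^n\sum_{m_2=0}^{n-m_1}\cdots\sum_{m_{q-1}=0}^{n-m_1-\cdots-m_{q-2}}\binom{n}{m_0^*,m_1,\ldots,m_{q-1}}\xi_p^{\Tr\left(F_{k;\mathbb{F}_q}^{(p)}(m_1,\ldots,m_{q-1})\right)}.
\]
With $r = q-1$, $\xi = \xi_p$, and the integer-valued function
\[
G(m_1,\ldots,m_{q-1}) := \Tr\left(F_{k;\mathbb{F}_q}^{(p)}(m_1,\ldots,m_{q-1})\right) \bmod p,
\]
this is precisely a sum of the type (\ref{gensumA}).

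Next, I would check the two hypotheses required by Theorem \ref{generalclosedA}. The condition $\xi^D=1$ is immediate because $D=p^{\lfloor\log_p(k)\rfloor+1}$ is a power of $p$, so $\xi_p^D=1$. For periodicity in each component, Lemma \ref{lambdaperiod} establishes that $\Lambda^{(p)}_{\alpha_1,\ldots,\alpha_{q-1}}(k,\cdot)$ is periodic in each of its $m_j$ arguments with period $D$; since $\Tr_{\mathbb{F}_q/\mathbb{F}_p}$ is a fixed $\mathbb{F}_p$-valued function on $\mathbb{F}_q$, composing with it preserves the mod-$p$ periodicity, so $G$ is periodic in each component with period $D$, as needed.

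With both hypotheses satisfied, Theorem \ref{generalclosedA} yields verbatim the closed formula for $S_{\mathbb{F}_q}(\boldsymbol{e}_{n,k})$ asserted in the statement, with the coefficients $c_{j_1,\ldots,j_{q-1}}(k)$ emerging as the specialization of the general $c_{j_1,\ldots,j_r}(D)$. The linear recurrence with characteristic polynomial $P_{q,k}(X)$ then follows from Corollary \ref{gencoro}, which supplies the integer-coefficient claim automatically. I do not foresee any substantial obstacle: the one nontrivial ingredient, periodicity of $\Lambda^{(p)}$, has already been handled in Lemma \ref{lambdaperiod}, and the rest of the argument is a matter of aligning indices and definitions with the general template.
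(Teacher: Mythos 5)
Your proposal is correct and follows essentially the same route as the paper: the authors likewise take the multinomial representation (\ref{goodtype2}), invoke Lemma \ref{lambdaperiod} for the componentwise periodicity with period $D$, and then apply Theorem \ref{generalclosedA} together with Corollary \ref{gencoro}. Your additional remarks (that $\xi_p^D=1$ since $D$ is a power of $p$, and that composing with $\Tr_{\mathbb{F}_q/\mathbb{F}_p}$ preserves the periodicity) are correct and merely make explicit what the paper leaves implicit.
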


\begin{proof}
The sum in (\ref{goodtype2}) is of type (\ref{gensum}).  Moreover, Lemma \ref{lambdaperiod} implies that $F^{(p)}_{n,k;\mathbb{F}_q}(m_1,\cdots,m_{q-1})$ is periodic in each component with period $D$.  
The result now follows from Theorem \ref{generalclosedA} and its corollary.
\end{proof}

Theorem \ref{closedformsSq} also provides a bound for the degree of the minimal linear recurrence with integer coefficients satisfied by $\{S_{\mathbb{F}_q}(\boldsymbol{e}_{n,k})\}$.

\begin{corollary}
 Let $k>1$ be positive integers and $p$ be a prime and $q=p^r$ with $r\geq 1$.  Let $D=p^{\lfloor\log_p(k)\rfloor+1}$. 
 The degree of the minimal linear recurrence with integer coefficients that $\{S_{\mathbb{F}_q}(\boldsymbol{e}_{n,k})\}$ satisfies is less than or equal to $(D)_q/q!$, where 
 $(a)_n = a(a+1)(a+2)\cdots(a+n-1)$ is the Pochhammer symbol.
\end{corollary}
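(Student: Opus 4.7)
The plan is to read off the bound directly from the characteristic polynomial $P_{q,k}(X)$ furnished by Theorem \ref{closedformsSq} and then translate the combinatorial count into the Pochhammer form.

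First I would invoke Theorem \ref{closedformsSq} to conclude that $\{S_{\mathbb{F}_q}(\boldsymbol{e}_{n,k})\}$ is annihilated by the integer-coefficient polynomial
\begin{equation*}
P_{q,k}(X)=\prod_{a_1=0}^{D-1}\,\prod_{0\leq a_2\leq a_1} \cdots \prod_{0\leq a_{q-1}\leq a_{q-2}} \left(X-\left(1+\xi_D^{a_1}+\cdots+\xi_D^{a_{q-1}}\right)\right).
\end{equation*}
Since the characteristic polynomial of the minimal integer-coefficient linear recurrence must divide $P_{q,k}(X)$, its degree is bounded above by $\deg P_{q,k}(X)$. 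So the task reduces to counting the factors of $P_{q,k}(X)$ and comparing that count to $(D)_q/q!$.

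Next I would count: the indices $(a_1,\ldots,a_{q-1})$ appearing in the product range over weakly decreasing tuples of length $q-1$ with entries in $\{0,1,\ldots,D-1\}$, and the number of such tuples is the number of multisets of size $q-1$ drawn from a $D$-element set. This equals $\binom{D+q-2}{q-1}$, which in Pochhammer notation is $(D)_{q-1}/(q-1)!$. Thus
\begin{equation*}
\deg P_{q,k}(X) \;=\; \binom{D+q-2}{q-1} \;=\; \frac{(D)_{q-1}}{(q-1)!}.
\end{equation*}

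Finally I would verify the inequality $(D)_{q-1}/(q-1)!\le (D)_q/q!$, which follows at once from
\begin{equation*}
\frac{(D)_q/q!}{(D)_{q-1}/(q-1)!} \;=\; \frac{D+q-1}{q} \;\ge\; 1,
\end{equation*}
valid whenever $D\ge 1$. Chaining these three observations yields the stated bound. The only potentially delicate point is the combinatorial identification of $\deg P_{q,k}(X)$ with the number of weakly decreasing tuples — one has to be careful that the nested product does not overcount — but this is immediate from reading the index ranges. No estimate is subtle here; the corollary is essentially a bookkeeping consequence of Theorem \ref{closedformsSq}.
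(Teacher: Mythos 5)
Your proof is correct, and it follows the same basic route as the paper (the minimal characteristic polynomial divides $P_{q,k}(X)$, so its degree is at most $\deg P_{q,k}(X)$), but your degree computation is the right one and the paper's is not. The paper's proof asserts flatly that $\deg P_{q,k}(X)=(D)_q/q!$, whereas the product defining $P_{q,k}(X)$ runs over weakly decreasing tuples $(a_1,\dots,a_{q-1})$ of length $q-1$ (not $q$) with entries in $\{0,\dots,D-1\}$, so the number of linear factors is $\binom{D+q-2}{q-1}=(D)_{q-1}/(q-1)!$, exactly as you say. The paper's own Example \ref{exF4} confirms this: for $q=4$, $k=3$, $D=4$, the displayed factorization of $P_{4,3}(X)$ has degree $20=(4)_3/3!$, not $(4)_4/4!=35$. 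Your final step, checking that $(D)_{q-1}/(q-1)!\leq (D)_q/q!$ via the ratio $(D+q-1)/q\geq 1$, is what reconciles the correct count with the (non-tight) bound actually stated in the corollary; without that bridge the corollary as stated would not follow from the count alone. In short, you prove a sharper bound, $\binom{D+q-2}{q-1}$, and then weaken it to the stated one, which is a strictly better result than what the paper's one-line proof delivers.
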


\begin{proof}
 The characteristic polynomial of such recurrence is a factor of $P_{q,k}(X)$.  The result now follows from the fact that the degree of $P_{q,k}(X)$ is $(D)_q/q!$.
\end{proof}

\begin{example}
\label{exF4}
 Consider the sequence $\{S_{\mathbb{F}_4}(\boldsymbol{e}_{n,3})\}$.  Theorem \ref{closedformsSq} implies that this sequence satisfies the linear recurrence whose characteristic is given by
\begin{eqnarray*}
 P_{4,3}(X)&=& \prod_{a_1=0}^3\prod_{0\leq a_2\leq a_1}\prod_{0\leq a_3\leq a_2}\left(X-\left(1+i^{a_1}+i^{a_2}+i^{a_3}\right)\right)\\
 &=&(X-4) (X-2)^2 X^2 (X+2) \left(X^2+4\right) \left(X^2-6 X+10\right) \left(X^2-4 X+8\right)\\
 &&\left(X^2-2 X+2\right)^2 \left(X^2-2 X+10\right) \left(X^2+2 X+2\right).
\end{eqnarray*}
The minimal linear recurrence with integer coefficients that $\{S_{\mathbb{F}_4}(\boldsymbol{e}_{n,3})\}$ satisfies has characteristic polynomial given by
$$\mu_{4,3}(X)=(X-4) (X-2) \left(X^2+4\right).$$
Note that, as expected, $\mu_{4,3}(X)|P_{4,3}(X)$.  After simplification, the closed formula given by Theorem \ref{closedformsSq} is
\begin{eqnarray*}
S_{\mathbb{F}_4}(\boldsymbol{e}_{n,3})&=&4^{n-1}+3\cdot 2^{n-1}-\frac{3}{4}(2i)^n-\frac{3}{4}(-2i)^n\\
&=&4^{n-1}+3\cdot 2^{n-1}-3\cdot 2^{n-1} \cos \left(\frac{n\pi}{2}\right).
\end{eqnarray*}
The function $\Tr_{\mathbb{F}_4/\mathbb{F}_2}(\boldsymbol{e}_{n,3})$ can be identified with a $2n$-variable Boolean function.  The identification depends on the value-vector of $\Tr_{\mathbb{F}_4/\mathbb{F}_2}(\boldsymbol{e}_{n,3})$,
which is a $2n$-tuple of 0's and 1's, and an order of the elements of $\mathbb{F}_{2}^{2n}$ (different order, different representation).  For instance, $\Tr_{\mathbb{F}_4/\mathbb{F}_2}(\boldsymbol{e}_{4,3})$ can be identified with
\begin{align*}
 F_8({\bf X})=&X_2 X_3 X_5+X_1 X_4 X_5+X_2 X_4 X_5+X_2 X_7 X_5+X_4 X_7 X_5+X_1 X_8 X_5+X_2 X_8 X_5+\\
 &X_3 X_8 X_5+X_4 X_8 X_5+X_1 X_3 X_6+X_2 X_3 X_6+X_1 X_4 X_6+X_2 X_3 X_7+X_1 X_4 X_7+\\
 &X_2 X_4 X_7+X_1 X_6 X_7+X_2 X_6 X_7+X_3 X_6 X_7+X_4 X_6 X_7+X_1 X_3 X_8+X_2 X_3 X_8+\\
 &X_1 X_4 X_8+X_1 X_6 X_8+X_3 X_6 X_8.
\end{align*}
Observe that $S_{\mathbb{F}_4}(\boldsymbol{e}_{4,3})=S_{\mathbb{F}_2}(F_8)=64.$
\end{example}

\begin{example}
 Consider the sequence $\{S_{\mathbb{F}_8}(\boldsymbol{e}_{n,3})\}$.  Theorem \ref{closedformsSq} implies that this sequence satisfies the linear recurrence whose characteristic is given by
\begin{eqnarray*}
 P_{8,3}(X)&=& \prod_{a_1=0}^3\prod_{a_2=0}^{a_1}\prod_{a_3=0}^{a_2}\prod_{a_4=0}^{a_3}\prod_{a_5=0}^{a_4}\prod_{a_6=0}^{a_5}\prod_{a_7=0}^{a_6}\left(X-\left(1+i^{a_1}+i^{a_2}+i^{a_3}+i^{a_4}+i^{a_5}+i^{a_6}+i^{a_7}\right)\right).
\end{eqnarray*}
The minimal linear recurrence with integer coefficients that $\{S_{\mathbb{F}_8}(\boldsymbol{e}_{n,3})\}$ satisfies has characteristic polynomial given by
$$\mu_{8,3}(X)=(X-4) (X+4) \left(X^2+16\right) \left(X^2-8 X+32\right) \left(X^2-4 X+8\right) \left(X^2+4 X+8\right).$$
It can be verified that $\mu_{8,3}(X)|P_{8,3}(X)$.  The closed formula for this exponential sum is given (after simplification) by
\begin{equation*}
 S_{\mathbb{F}_8}(\boldsymbol{e}_{n,3})=\frac{1}{8} \left(2 \sqrt{2}\right)^n \left(\left(9+(-1)^n\right) \left(\sqrt{2}\right)^n+2 \left(2^n+9\right) \sin \left(\frac{n\pi}{4}\right)
 -6 \sin \left(\frac{3n \pi}{4}\right)-6 \left(\sqrt{2}\right)^n \cos \left(\frac{n\pi}{2}\right)\right).
\end{equation*}
As with the previous example, the function $\Tr_{\mathbb{F}_8/\mathbb{F}_2}(\boldsymbol{e}_{n,3})$ can be identified with a $3n$-variable Boolean function.
\end{example}

These two examples show a big difference between the degrees of the polynomials $P_{q,k}(X)$ and $\mu_{q,k}(X)$, where $\mu_{q,k}(X)$ represents the characteristic polynomial of the minimal linear recurrence
with integer coefficients satisfied by the sequence $\{S_{\mathbb{F}_q}(\boldsymbol{e}_{n,k})\}$.  In particular, $P_{q,k}(X)$ does not seem to be tight.  However, what you are seeing here
is the fact that when working over $\mathbb{F}_q$ with $q=p^r$ and $r>1$, some of the factors of $P_{q,k}(X)$ are repeated multiple times.  For instance, consider Example \ref{exF4}.  Observe that when
$(a_1,a_2,a_3)=(2,1,0)$ we get the factor $X-(1 + i)$.  However when $(a_1,a_2,a_3)=(3,1,1)$, we also get the factor $X-(1+i)$. Therefore, this factor is repeated twice. The factor $X-(1-i)$ is also repeated twice.  That is why the factor
$X^2-2 X+2$ appears in $P_{4,3}(X)$ with 2 as exponent.  This phenomenon does not occur over $\mathbb{F}_p$.  In fact, there are examples where the polynomial $P_{p,k}(X)$ is tight.

\begin{example}
 Consider the sequence $\{S_{\mathbb{F}_3}(\boldsymbol{e}_{n,7})\}$.  The characteristic polynomial of the minimal linear recurrence with integer coefficients satisfied by 
 this sequence is 
 $$\mu_{3,7}(X)=\frac{1}{X}P_{3,7}(X).$$
The term $1/X$ in front of $P_{3,7}(X)$ comes from the fact that $P_{3,7}(0)=0$, i.e., 0 is a root for $P_{3,7}(X)$.  However, the root 0 does not contribute anything to the closed formula for the exponential sum.
Therefore, taking the term $X$ does not alter the result.  Thus, the polynomial $P_{3,7}(X)$ is tight for this example.
\end{example}

The repetition of factors can be eliminated by using {\it least common multiples} ($\lcm$).

\begin{theorem}
 Let $n$ and $k>1$ be positive integers and $p$ be a prime and $q=p^r$ with $r\geq 1$.  Let $D=p^{\lfloor\log_p(k)\rfloor+1}$.  Let $M_{a_1,\cdots,a_{q-1}}(X)$ be the minimal polynomial
 for the algebraic integer $1+\xi_D^{a_1}+\cdots+\xi_D^{a_{q-1}}$.  Then, $\{S_{\mathbb{F}_q}(\boldsymbol{e}_{n,k})\}$ satisfies the linear recurrence with integer coefficients whose characteristic
 polynomial is given by
 $$\chi_{q,k}(X)=\lcm\left(\mu_{a_1,\cdots,a_{q-1}}(X)\right)_{0\leq a_{q-1}\leq \cdots \leq a_2\leq a_1 \leq D-1}.$$
\end{theorem}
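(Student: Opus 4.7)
The plan is to build directly on Theorem~\ref{closedformsSq}, which already writes $S_{\mathbb{F}_q}(\boldsymbol{e}_{n,k})$ as a $\mathbb{C}$-linear combination of $n$-th powers of the eigenvalues $\lambda_{a_1,\ldots,a_{q-1}} = 1+\xi_D^{-a_1}+\cdots+\xi_D^{-a_{q-1}}$ indexed by tuples with $0\leq a_{q-1}\leq \cdots \leq a_1\leq D-1$. By the standard theory of $C$-finite sequences, a sequence of this shape satisfies any linear recurrence whose characteristic polynomial vanishes at each $\lambda_{a_1,\ldots,a_{q-1}}$ that occurs with nonzero weight (the multiplicity requirement is automatic, since each eigenvalue appears to the first power). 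So the task reduces to showing that $\chi_{q,k}(X)$ lies in $\mathbb{Z}[X]$ and that every such $\lambda_{a_1,\ldots,a_{q-1}}$ is one of its roots.

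First I would observe that each $\lambda_{a_1,\ldots,a_{q-1}}$ is an algebraic integer, being a sum of roots of unity. Its minimal polynomial over $\mathbb{Q}$, which the statement calls $M_{a_1,\ldots,a_{q-1}}(X)$ (alias $\mu_{a_1,\ldots,a_{q-1}}(X)$), is therefore a monic irreducible element of $\mathbb{Z}[X]$. Consequently, the least common multiple
$$\chi_{q,k}(X)=\lcm_{0\leq a_{q-1}\leq \cdots\leq a_1\leq D-1}M_{a_1,\ldots,a_{q-1}}(X)$$
reduces to the product of the \emph{distinct} minimal polynomials in that family, so it is again a monic element of $\mathbb{Z}[X]$. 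Note that, since $\{1+\xi_D^{-a_1}+\cdots+\xi_D^{-a_{q-1}}\}$ and $\{1+\xi_D^{a_1}+\cdots+\xi_D^{a_{q-1}}\}$ coincide as sets as $(a_1,\ldots,a_{q-1})$ ranges over $\{0,\ldots,D-1\}^{q-1}$, there is no mismatch between the eigenvalues in Theorem~\ref{closedformsSq} and the algebraic integers whose minimal polynomials are combined here.

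Next, by the very definition of the lcm, each $M_{a_1,\ldots,a_{q-1}}(X)$ divides $\chi_{q,k}(X)$, so every $\lambda_{a_1,\ldots,a_{q-1}}$ is a root of $\chi_{q,k}(X)$. Combining this with Theorem~\ref{closedformsSq} and the elementary fact that a linear combination $\sum_i c_i \lambda_i^n$ satisfies every linear recurrence whose characteristic polynomial vanishes at each $\lambda_i$, we conclude that $\{S_{\mathbb{F}_q}(\boldsymbol{e}_{n,k})\}$ satisfies the linear recurrence with integer coefficients associated with $\chi_{q,k}(X)$.

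No step is expected to present a serious obstacle: the whole argument is essentially bookkeeping on top of Theorem~\ref{closedformsSq} once one recalls that sums of roots of unity are algebraic integers and that the lcm of distinct monic irreducibles in $\mathbb{Z}[X]$ is their product. The entire point of passing from $P_{q,k}(X)$ to $\chi_{q,k}(X)$ is precisely to absorb the repetitions of eigenvalues across different index tuples (the phenomenon observed in Example~\ref{exF4}) into the lcm, so that the characteristic polynomial becomes smaller without losing any of the roots actually needed to annihilate the sequence.
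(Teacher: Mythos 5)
Your argument is correct and is exactly the reasoning the paper intends: the paper states this theorem without any proof, treating it as an immediate consequence of Theorem~\ref{closedformsSq} and the preceding discussion of repeated eigenvalues, and your write-up simply makes that implicit argument explicit (including the useful observations that conjugation makes the sign of the exponents irrelevant and that the lcm of the monic irreducible minimal polynomials is their product over the distinct ones, hence monic in $\mathbb{Z}[X]$). Nothing is missing.
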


We point out that Theorem \ref{closedformsSq} and other results after it can be extended to linear combinations of elementary symmetric polynomials without too much effort. For instance, 
suppose that $0\leq k_1<\cdots<k_s$ are integers and 
$\beta_1,\cdots,\beta_s \in \mathbb{F}_q^{\times}$. The discussion prior Theorem \ref{closedformsSq} together with Corollary \ref{expsumformcoro} implies that
\begin{align}
\label{goodtype3}
S_{\mathbb{F}_q}\left(\sum_{j=1}^s \beta_j\boldsymbol{e}_{n,k_j}\right) =& \sum_{m_1=0}^n\sum_{m_2=0}^{n-m_1}\cdots\sum_{m_{q-1}=0}^{n-m_1-\cdots-m_{q-1}}{n\choose m_0^*,m_1,m_2,\cdots, m_{q-1}}\\\nonumber
& \times \xi_p^{\text{Tr}\left(\sum_{j=1}^s \beta_j F^{(p)}_{k,\mathbb{F}_q}\left(m_1,\cdots,m_{q-1}\right)\right)}.
\end{align}
The statement of Theorem \ref{closedformsSq} can now be written almost verbatim for linear combinations of elementary symmetric polynomials.
The only differences are that $D$ is now $D=p^{\lfloor\log_p(k_s)\rfloor+1}$
and $$\Tr\left(F_{k;\mathbb{F}_q}^{(p)}\left(b_1,\cdots,b_{q-1}\right)\right)$$ in the definition of $c_{j_1,\cdots,j_{q-1}}(k)$ must be replaced by
$$\Tr\left(\sum_{j=1}^s \beta_j F_{k_j;\mathbb{F}_q}^{(p)}\left(b_1,\cdots,b_{q-1}\right)\right).$$
Similar adjustments apply to the other results.

\section{Concluding remarks}
We expressed exponential sums of symmetric polynomials over finite fields as multinomial sums.  These expressions represent a computational improvement over the definition of exponential sums.  
These expressions also provided a link between balancedness of symmetric polynomials over Galois fields and a problem similar to the one of bisecting binomial coefficients.  We also proved closed 
formulas for exponential sums of symmetric polynomials over Galois fields by exploiting their multinomial sum representations.  These closed formulas extend the work of Cai, Green and Thierauf on the 
binary field to every finite field.  Moreover, we showed that the recursive nature of these exponential sums is not special to the binary case.  Finally, since every multi-variable function over a finite 
field extension of $\mathbb{F}_2$ can be identified with a Boolean function, then perhaps these results can be used to find new families of Boolean functions that might be useful for efficient implementations. \medskip

\noindent
{\bf Acknowledgments.}  The authors would like to thank Oscar E. Gonz\'alez for reading a previous version of this article. His comments and suggestions improve the presentation of this work.

\bibliographystyle{plain}

\begin{thebibliography}{1}

\bibitem{acgmr} R. A. Arce-Nazario, F. N. Castro, O. E. Gonz\'alez, L. A. Medina and I. M. Rubio.
\newblock New families of balanced symmetric functions and a generalization of Cuscik, Li and
P. St$\check{\mbox{a}}$nic$\check{\mbox{a}}$.
\newblock {\it Designs, Codes and Cryptography} (2017) DOI: 10.1007/s10623-017-0351-7.

\bibitem{cai} J. Cai, F. Green and T. Thierauf. 
\newblock On the correlation of symmetric functions.
\newblock {\it Math. Systems Theory} {\bf 29} (1996) 245--2€58.

\bibitem{canteaut} A. Canteaut and M. Videau.
\newblock Symmetric Boolean Functions.
\newblock {\it IEEE Trans. Inf. Theory} {\bf 51} (8) (2005) 2791--2881.

\bibitem{Davis}
\newblock Philip Davis. Circulant Matrices.
\newblock Chelsea publishing, Second Edition,1994.


\bibitem{cgm2} F. N. Castro, O. E. Gonz\'alez and L. A. Medina.
\newblock Diophantine equations with binomial coefficients and perturbations of symmetric Boolean functions.
\newblock {IEEE Trans. Inf. Theory} (2017) DOI 10.1109/TIT.2017.2750674.


\bibitem{cm1} F. N. Castro and L. A. Medina. 
\newblock Linear Recurrences and Asymptotic Behavior of Exponential Sums of Symmetric Boolean Functions. 
\newblock {\it Elec. J. Combinatorics} {\bf 18} (2011) \#P8.

\bibitem{cm2} F. N. Castro and L. A. Medina. 
\newblock Asymptotic Behavior of Perturbations of Symmetric Functions.  
\newblock {\it Annals of Combinatorics} {\bf 18} (2014) 397--417.

\bibitem{cm3} F. N. Castro and L. A. Medina. 
\newblock Modular periodicity of exponential sums of symmetric Boolean functions.
\newblock {\it Discrete Appl. Math.} {\bf 217} (2017) 455--473.

\bibitem{cms} F. N. Castro, L. A. Medina and P. St$\check{\mbox{a}}$nic$\check{\mbox{a}}$.
\newblock Generalized Walsh transforms of symmetric and rotation symmetric Boolean functions are linear recurrent.
\newblock {\em Submitted.}

\bibitem{ccms} F. N. Castro, R. Chapman, L. A. Medina, and L. B. Sep\'ulveda.  
\newblock Recursions associated to trapezoid, symmetric and rotation symmetric functions over Galois fields.
\newblock arXiv:1702.08038, 2017.

\bibitem{cusick4} T. W. Cusick. 
\newblock Hamming weights of symmetric Boolean functions.
\newblock {\it Discrete Appl. Math.} {\bf 215} (2016) 14--19.

\bibitem{cusick2} T. W. Cusick, Y. Li, and  P. St$\check{\mbox{a}}$nic$\check{\mbox{a}}$.
\newblock Balanced Symmetric Functions over $GF(p)$.
\newblock {\it IEEE Trans. on Information Theory} {\bf 5} (2008) 1304--1307.

\bibitem{ggz} Y. Guo, G. Gao, Y. Zhao.
\newblock Recent Results on Balanced Symmetric Boolean Functions.
\newblock {\it IEEE Trans. Inf. Theory} {\bf 62} (9) (2016) 5199--5203.

\bibitem{ims} E. J. Iona\c{s}cu, Thor Martinsen, Pantelimon St$\check{\mbox{a}}$nic$\check{\mbox{a}}$.
\newblock Bisecting binomial coefficients.
\newblock {\it Discrete Appl. Math.} {\bf 227} (2017) 70--83.

\bibitem{mitchell}  C. Mitchell.
\newblock Enumerating Boolean functions of cryptographic significance.
\newblock {\em J. Cryptology} {\bf 2} (3) (1990) 155--170.

\bibitem{rp} C. Riera and M. G. Parker.
\newblock Generalized bent criteria for Boolean functions.
\newblock {\it IEEE Trans. Inform. Theory} {\bf 52} (9) (2006) 4142--4159.

\end{thebibliography}

\end{document}